\newtheorem{theorem}{Theorem}[section]
\newtheorem{lemma}[theorem]{Lemma}
\newtheorem{proposition}[theorem]{Proposition}
\newtheorem{remark}[theorem]{Remark}
\def\R{{\mathbb{R}}}
\def\H{{I\!\!H}}
\def\CC{{\rm \kern.24em \vrule width.02em height1.4ex depth-.05ex
		\kern-.26emC}}
	\def\Xint#1{\mathchoice
		{\XXint\displaystyle\textstyle{#1}}%
		{\XXint\textstyle\scriptstyle{#1}}%
		{\XXint\scriptstyle\scriptscriptstyle{#1}}%
		{\XXint\scriptscriptstyle\scriptscriptstyle{#1}}%
		\!\int}
	\def\XXint#1#2#3{{\setbox0=\hbox{$#1{#2#3}{\int}$}
			\vcenter{\hbox{$#2#3$}}\kern-.5\wd0}}
	\def\dashint{\Xint-}
\newcommand{\pvint}{\mathop{\mathrlap{\pushpv}}\!\int}
\newcommand{\pushpv}{\mathchoice
	{\mkern19.5mu\rule[.6ex]{.5em}{.5pt}}
	{\mkern2.8mu\rule[.5ex]{.35em}{.8pt}}
	{\mkern2.5mu\rule[.50ex]{.7em}{.7pt}}
	{\mkern2mu\rule[.5ex]{.7em}{.5pt}}
}
\newcommand{\ppvint}{\mathop{\mathrlap{\pushppv}}\!\int}
\newcommand{\pushppv}{\mathchoice
	{\mkern28.9mu\rule[.6ex]{.5em}{.5pt}}
	{\mkern2.8mu\rule[.5ex]{.35em}{.8pt}}
	{\mkern2.5mu\rule[.50ex]{.7em}{.7pt}}
	{\mkern2mu\rule[.5ex]{.7em}{.5pt}}
}
\newcommand{\pppvint}{\mathop{\mathrlap{\pushpppv}}\!\int}
\newcommand{\pushpppv}{\mathchoice
	{\mkern31.9mu\rule[.6ex]{.5em}{.5pt}}
	{\mkern2.8mu\rule[.5ex]{.35em}{.8pt}}
	{\mkern2.5mu\rule[.50ex]{.7em}{.7pt}}
	{\mkern2mu\rule[.5ex]{.7em}{.5pt}}
}
\def\TagOnRight
\def\AA{{it I} \hskip-3pt{\tt A}}
\def\QQ{\rlap {\raise 0.4ex \hbox{$\scriptscriptstyle |$}} {\hskip -0.1em Q}}
\def\theequation{\@arabic{\c@section}.\@arabic{\c@equation}}
\renewcommand{\div}{{\mathrm{div}}}
\newcommand{\DT}{\mathbb{D}}
\newcommand{\HC}[2]{\mathcal{C}^{{#1},{#2}}}
\renewcommand{\L}[1]{L^{#1}(\Omega)}
\newcommand{\Lb}[1]{L^{#1}(\Gamma)}
\newcommand{\vL}[1]{\bm{L}^{#1}(\Omega)}
\renewcommand{\H}[1]{H^{#1}(\Omega)}
\newcommand{\W}[2]{W^{#1,#2}(\Omega)}
\newcommand{\vW}[2]{\bm{W}^{#1,#2}(\Omega)}
\newcommand{\Wfracb}[2]{W^{#1,#2}(\Gamma)}
\newcommand{\vu}{\bm{u}}
\newcommand{\vn}{\bm{n}}
\newcommand{\vt}{\bm{\tau}}
\begin{document}
\title{Uniform $W^{1,p}$ estimate for elliptic operator with Robin boundary condition in $\mathcal{C}^1$ domain}
	

\author[1]{C.~Amrouche \thanks{cherif.amrouche@univ-pau.fr}}
\author[2]{C.~Conca \thanks{cconca@dim.uchile.cl}}
\author[1,3]{A.~Ghosh \thanks{amrita.ghosh@univ-pau.fr}}
\author[4]{T.~Ghosh \thanks{tuhing@uw.edu}}

\affil[1]{LMAP, UMR CNRS 5142, Universit\'{e} de Pau et des Pays de l'Adour, France.}
\affil[2]{Departamento de Ingenier\'{i}a Matem\'{a}tica, Facultad de Ciencias F\'{i}sicas y Matem\'{a}ticas, Universidad de Chile.}
\affil[3]{Departamento de Matem\'{a}ticas, Universidad del Pa\'{i}s Vasco, 48940 Lejona, Spain.}
\affil[4]{Department of Mathematics, University of Washington, Seattle.}

\maketitle

\begin{abstract}
We consider the Robin boundary value problem  $\div (A\nabla u) = \div \bm{f}+F$ in $\Omega$, $\mathcal{C}^1$ domain, with $(A\nabla u - \bm{f})\cdot \vn + \alpha u =  g$ on $\Gamma$, where the matrix $A$ belongs to $VMO (\R^3) $,  and discover the uniform estimates on $\|u\|_{W^{1,p}(\Omega)}$, with $1 < p < \infty$,   independent on $\alpha$.  At the difference with the case $p = 2,$ which is simpler, we call here the weak reverse H\"older 
inequality. This estimates show that the solution of Robin problem converges strongly to the solution of Dirichlet (resp. Neumann) problem in corresponding spaces when the parameter $\alpha$ tends to $\infty$ (resp. $0$).
\end{abstract}

\section{Introduction and statement of main result}
\setcounter{equation}{0}
This paper is concerned with the second order elliptic operator of divergence form with Robin boundary condition. In a bounded domain (open, connected set) $\Omega$ in $\mathbb{R}^n$ with $\bm{f}\in \vL{p}, F\in \L{r(p)}$ and $g\in \Wfracb{-\frac{1}{p}}{p}$, consider the following problem
\begin{equation}
\label{robin1}
\left\{
\begin{aligned}
\mathcal{L} u &= \div \bm{f}+F \quad &&\text{ in } \Omega,\\
(A\nabla u - \bm{f})\cdot \vn + \alpha u &= g \quad &&\text{ on } \Gamma
\end{aligned}
\right.
\end{equation}
where
\begin{equation}
\label{10}
\mathcal{L} =  \div (A\nabla)
\end{equation}
with $A(x) = (a_{ij}(x))$ is an $3\times 3$ matrix with real-valued, bounded, measurable entries satisfying the following uniform ellipticity condition
\begin{equation*}
\mu |\xi|^2 \le A(x) \xi \cdot \xi \le \frac{1}{\mu} |\xi|^2 \quad \text{ for all } \,\,\, \xi,\ x \in \R^{n} \,\,\, \text{ and some } \mu >0. 
\end{equation*}
Here $\bm{n}$ is the outward unit normal vector on the boundary.


We want to study the well-posedness of the problem (\ref{robin1}), precisely, the existence, uniqueness of weak solution of (\ref{robin1}) in $W^{1,p}(\Omega)$ for any $p\in(1,\infty)$ and the bound on the solution, uniform in $\alpha$. Assuming $\alpha\ge 0$ a constant or a smooth function, the proof of existence of a unique solution provided $A\in VMO (\R^3) $  uses Neumann regularity results for elliptic problems; the interested reader is referred to \cite{DK} for details. The case $\alpha \le 0$ corresponds to the so-called Steklov eigenvalue problem. A recent survey on this topic can be seen in \cite{GP} and the references therein. That being said, our main interest in this work is to obtain precise estimates on the solution, in particular uniform estimates in $\alpha$.

Note that, formally, $\alpha = \infty$ corresponds to the Dirichlet boundary condition whereas $\alpha =0$ gives the Neumann boundary condition. In both Dirichlet and Neumann cases, we have the classical $W^{1,p}$ estimate of the solution.
And so for the Robin problem as follows:
\begin{equation*}
\|u\|_{\W{1}{p}} \le C(\alpha) \left( \|\bm{f}\|_{\vL{p}} + \|F\|_{\L{r(p)}} + \|g\|_{\Wfracb{-\frac{1}{p}}{p}}\right),
\end{equation*}
where $C(\alpha)$ depends also on $p$ and on $ \Omega$. Such well-posedness results on Robin boundary value problem for arbitrary domains can be found, for example, in \cite{DD}. But the continuity constant depends on $\alpha$ whereas the constant in Dirichlet (and Neumann) estimate has no $\alpha$. So it is natural to expect 
we may obtain $\alpha$-independent bound of the solution of problem (\ref{robin1}). That is,
if we let $\alpha$ tend to $\infty$, we show rigorously that we  get back the solution of the Dirichlet problem. The case when $\alpha$ goes to $0$ is relatively easier to handle (though not trivial) assuming the compatibility condition of the Neumann problem.

The purpose of this article is to estimate the continuity constant $C(\alpha)$ uniformly with respect to $\alpha$. Among the vast literature on Robin boundary value problem and various related questions to study, we did not find any reference concerning the question of \textit{behavior of the solution on the parameter $\alpha$} in the existing literature so far, even for Laplacian. 

One of the main motivation comes from the Stokes  (and also the Navier-Stokes) problem with Navier slip boundary condition
\begin{equation}
\label{SS}
\begin{cases}
-\nu\Delta\vu +\nabla \pi=\bm{f} ,\quad \div\;\vu=0 \ &\text{ in $\Omega$}\\
\vu\cdot\vn=0, \quad 2\left[(\DT\vu )\vn \right]_{\vt}+\alpha\vu_{\vt}=\bm{0} \ &\text{ on $\Gamma$}.
\end{cases}
\end{equation}
where the function $\alpha$ refers to the \textit{friction coefficient}. Here, to understand the behavior of the solution with respect to the friction coefficient is an important question to study (see \cite{Masmoudi2}, \cite{Pal}). Obtaining bound uniform in $\alpha$ in this case leads to study the limit problem: the solution of the Navier-Stokes equations with Navier boundary condition converges strongly in $\vW{1}{p}$ to the solution of the Navier-Stokes-Dirichlet problem as $\alpha$ tends to $\infty$ (see \cite{AG}). This observation may further enable us to recover informations concerning the solutions of the Navier-Stokes problem with Dirichlet boundary condition. Observe that the above Stokes system reduces to the problem (\ref{robin1}) in the simplest case, replacing the Stokes operator by Laplacian and the Navier boundary condition by Robin. To work with the full Navier-Stokes system with the complicated boundary condition was at the beginning quite cumbersome, thus we concentrated on the simpler scalar version. Surprisingly we found that this is itself an interesting question and still difficult to answer.

Here is our main result.
Throughout this work, the following assumption on $\alpha$ will be considered which we do not mention each time:
\begin{equation}
\label{alpha}
\alpha \in \Lb{t(p)} \quad \text{ and }\quad \alpha \ge \alpha_*>0 \quad \text{ on }\Gamma
\end{equation}
where $t(p)$ defined by
\begin{equation}\label{def_exponent_tp_alpha}
\begin{cases}
t(p) = 2 & \text{if\quad} p=2\\
t(p) = 2+\varepsilon & \text{if\quad} \frac{3}{2}\leq p\leq 3, p \neq 2\\
t(p)= \frac{2}{3} \max\{p,p^\prime\}+\varepsilon & \text{otherwise}
\end{cases}
\end{equation}
where $\varepsilon>0$ is arbitrary, satisfies $t(p) = t(p')$.

Also let $F\in \L{r(p)}$ where
\begin{equation}
\label{def:exponent_rp}
r(p)=
\begin{cases}\frac{3p}{p+3} & \text{if\quad}p>\frac{3}{2}\\
\text{any arbitrary real number}>1 & \text{if\quad}p=\frac{3}{2}\\
1 & \text{if\quad}p<\frac{3}{2}.
\end{cases}
\end{equation}

\begin{theorem}
	\label{T2}
Let $\Omega$ be a $\mathcal{C}^1$ bounded domain in $\R^3$, $p\in(1,\infty)$, $\bm{f}\in\vL{p}$, $F\in \L{r(p)}$, $g\in \Wfracb{-\frac{1}{p}}{p}$ and $\alpha \in \Lb{t(p)}$.
Suppose that the coefficients of the operator $\mathcal{L}$, defined in (\ref{10}), are symmetric and in $VMO(\R^3)$. Then the solution $u\in\W{1}{p}$ of (\ref{robin1}) satisfies the following estimate:
	\begin{equation}
	\label{Lp1}
	\|u\|_{\W{1}{p}} \le C_p(\Omega,\alpha_*) \left( \|\bm{f} \|_{\vL{p}} + \|F\|_{\L{r(p)}} + \|g\|_{\Wfracb{-\frac{1}{p}}{p}}\right) 
	\end{equation}
	where the constant $C_p(\Omega,\alpha_*)>0$ is independent of $\alpha$.
\end{theorem}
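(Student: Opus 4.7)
My plan is to reduce the uniform $W^{1,p}$ estimate to the (easier) uniform $H^1$ estimate via a real-variable argument of Shen type, based on a weak reverse Hölder inequality for $\nabla u$. A uniform base case at $p=2$, together with such a local self-improvement, will give \eqref{Lp1} for $p$ in an interval $(q',q)$ around $2$, and duality will close the remaining range $p\in(1,\infty)$.

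\emph{Step 1 (the case $p=2$).} Testing \eqref{robin1} by $u$ and using ellipticity gives
\[\mu\|\nabla u\|_{\vL{2}}^2 + \alpha_*\|u\|_{L^2(\Gamma)}^2 \leq |\langle \bm{f},\nabla u\rangle| + |\langle F,u\rangle| + |\langle g,u\rangle_\Gamma|,\]
and the right-hand side is controlled by Young, Poincaré, Sobolev and trace inequalities. The key point is that $\|v\|_{\H{1}}^2$ is equivalent to $\|\nabla v\|_{\vL{2}}^2 + \|v\|_{L^2(\Gamma)}^2$ with constants depending only on $\Omega$. Since $\alpha \geq \alpha_* > 0$, no upper bound on $\alpha$ ever enters, giving \eqref{Lp1} at $p=2$ with a constant $C_2(\Omega,\alpha_*)$.

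\emph{Step 2 (weak reverse Hölder inequality).} For a boundary point $x_0\in\Gamma$ and small $r>0$, let $D_r = B(x_0,r)\cap\Omega$. For any local weak solution with vanishing data, I would aim to prove
\[\Bigl(\dashint_{D_r} |\nabla u|^q\Bigr)^{1/q} \leq C\Bigl(\dashint_{D_{2r}} |\nabla u|^2\Bigr)^{1/2}\]
for some fixed $q>2$, with $C$ independent of $\alpha$. The standard route is to freeze the coefficients of $A$ (legitimate since $A\in VMO(\R^3)$), compare $u$ to the solution $v$ of a Robin problem for a constant-coefficient operator with matrix $\bar A$ on a half-ball, and invoke boundary Lipschitz/Hölder regularity of $v$. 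The boundary term $\int_\Gamma \alpha v^2$ contributes with the correct sign to coercivity, while cross terms $\int_\Gamma \alpha v w$ that unavoidably appear are handled via Hölder with $\alpha \in \Lb{t(p)}$ together with trace-Sobolev embeddings. The exponents $t(p)$ and $r(p)$ in \eqref{def_exponent_tp_alpha}--\eqref{def:exponent_rp} are engineered precisely so that every such term closes using only the lower bound $\alpha_*$.

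\emph{Step 3 (real-variable argument and duality).} Combining Steps 1 and 2, the Shen real-variable theorem (or a Caffarelli--Peral iteration on good/bad sets) converts the local $(L^2\to L^q)$ self-improvement into a global $W^{1,p}$ bound for $p\in(q',q)$. Duality then extends the range to all $p\in(1,\infty)$: by symmetry of $A$ and of the Robin boundary form, the adjoint of \eqref{robin1} has the same structure, and the condition $t(p)=t(p')$ is exactly what allows the same hypothesis on $\alpha$ to be used simultaneously for the direct and the dual problem. The main obstacle I anticipate is maintaining uniformity in $\alpha$ throughout Step 2: naive estimates would produce constants such as $\|\alpha\|_{L^\infty(\Gamma)}$ that blow up as $\alpha\to\infty$, and avoiding this is what forces the precise choice of $t(p)$ and requires a careful Sobolev-trace interpolation at each absorption step.
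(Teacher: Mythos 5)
Your overall strategy — uniform $H^1$ base case, a weak reverse Hölder inequality (wRHI) obtained by freezing the VMO coefficients and comparing to a constant-coefficient Robin problem, then a Caffarelli--Peral/Shen real-variable theorem and duality — is the same architecture the paper follows (Lemmas \ref{Lrhi}, \ref{Lrhi.}, \ref{Lapprox}, Theorems \ref{8}, \ref{T3}, Lemma \ref{G0}, Propositions \ref{P1}--\ref{P2}). However, there are two points where your plan, as written, would not close.

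First, the range. You claim the wRHI with ``some fixed $q>2$'' and then assert that the real-variable argument gives $p\in(q',q)$ and that ``duality then extends the range to all $p\in(1,\infty)$''. This last step fails: duality swaps $p\leftrightarrow p'$, so from the interval $(q',q)$ you only recover $(q',q)$ again, and you remain stuck with a fixed gain. A Gehring-type self-improvement (your Lemma \ref{Lrhi} analogue) indeed gives only an $\varepsilon$ gain. What the paper actually proves is the wRHI for \emph{every} exponent $p>2$ (Lemma \ref{Lrhi.} and Theorem \ref{T3}), and this is not a Gehring consequence: it requires the boundary $\HC{0}{\gamma}$ estimate (\ref{bhe}) for the frozen constant-coefficient Robin problem with an \emph{arbitrary} $\gamma\in(0,1)$, from which one deduces $\sup |\nabla v|\lesssim (r/\delta(x))^\gamma(\dashint|\nabla v|^2+|v|^2)^{1/2}$ and, choosing $p\gamma<1$, the local $L^p$ bound for any $p$. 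That, combined with Lemma \ref{G0}, gives the \emph{global} estimate for all $p>2$ directly, and duality then handles $1<p<2$. You need to upgrade Step 2 to the full $p>2$ range before Step 3 can possibly deliver $(1,\infty)$.

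Second, the quantity in the boundary wRHI. At boundary balls the paper works with $|v|^2+|\nabla v|^2$ rather than $|\nabla v|^2$ alone (see (\ref{rhi1}) and (\ref{rhi1.})). This is forced by the Robin boundary condition: adding a constant to $v$ changes the boundary condition, so Caccioppoli gives $\int_B|\nabla v|^2+\int_\Gamma\alpha v^2\lesssim r^{-2}\int_{2B}|v|^2$ with $v$ itself on the right, not $v-\bar v$, and one cannot invoke Sobolev--Poincaré to land back on $\nabla v$. The correct fix is exactly the paper's: run the whole scheme for $G=|u|+|\nabla u|$ near the boundary (as in case (ii) of the proof of Theorem \ref{T1}), not for $|\nabla u|$ alone, using the norm equivalence $\|v\|_{H^1(B\cap\Omega)}^2\simeq\int_{B\cap\Omega}|\nabla v|^2+\int_{B\cap\Gamma}v^2$ (which is where $\alpha\ge\alpha_*$ enters and why no upper bound on $\alpha$ is needed). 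Your Step 2 as stated, targeting $\nabla u$ only, cannot be closed at boundary balls.

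Finally, a smaller remark: the hypothesis $\alpha\in\Lb{t(p)}$ is used in the paper primarily in the existence bootstrap (Theorem \ref{T0}), not inside the wRHI, where one works with a locally \emph{homogeneous} Robin condition and uses only $\alpha\ge\alpha_*$. Your reading that $t(p)=t(p')$ ``is exactly what allows the same hypothesis on $\alpha$ for the dual problem'' is a correct and useful observation, and the duality steps you sketch (with the $r(p)$ exponent) do match Propositions \ref{P1}--\ref{P2}.
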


Notice that, with above estimate result, we immediately get that the solution of the Robin problem (\ref{robin1}) converges strongly to the solution of Dirichlet boundary problem in the corresponding spaces as $\alpha$ goes to $\infty$. To prove the above theorem, we first obtain the result for $F = 0$, $g=0$ and $p>2$ and then for $p<2$ using duality argument; And finally for $F\ne 0$, $g\ne 0$. Essentially we want to utilise the $\alpha$-independent $L^2$ gradient estimate (which follows from the variational formulation) to yield $L^p$ gradient estimate. The main tool in the proof for $p>2$ is a weak reverse H\"older inequality (wRHI) for gradient satisfied by the solution of the homogeneous problem, shown in Theorem \ref{T3}. Note that for Lipschitz domain, the weak reverse H\"older inequality is only true for certain values of $p$, even for Dirichlet boundary condition. It was first proved by Giaquinta \cite[Proposition 1.1, Chapter V]{giaquinta} in the case of Dirichlet condition, on smooth domain and for Laplace operator which follows from an argument by Gehring \cite{gehring}. 
wRHI in the case of $B(x,r)\subset \Omega$ follows from the classical interior estimate for harmonic functions. 
But in the case when $x\in \Gamma$, some suitable boundary H\"older estimate is required. In the present paper, to treat the operator in divergence form with $VMO$-coefficients, we use an approximation argument from the constant coefficient operator case, found in \cite{CP}. In the case of Neumann problem and for general second order elliptic operator, the proof of wRHI has been done in \cite[section 4]{geng} in Lipschitz domain; Whereas the sketch of the proof for Neumann problem in smooth domain has been given in \cite[p. 914]{kenig-shen}.

We obtain the similar result for $H^s$-bound (on Lipschitz domain) for $s\in (0,\frac{1}{2})$ in Theorem \ref{Hs} and $W^{2,p}$-estimate (on $\HC{1}{1}$ domain) in Theorem \ref{strong}.

\section{Related results and Proof of Theorem \ref{T2}}	
\setcounter{equation}{0}
To prove Theorem \ref{T2}, we start with studying the existence result. Only the case $n=3$ has been discussed here for the sake of clarity but all the results are true for $n=2$ as well  and the exact same proofs follow with the necessary modifications.

\begin{theorem}[{\bf Existence result in \boldmath$\W{1}{p}, p\ge 2$}]
\label{T0}
Let $\Omega$ be a $\mathcal{C}^{1}$ bounded domain in $\R^3$ and $p \ge 2$. Suppose that the coefficients of the operator $\mathcal{L}$, defined in (\ref{10}), are symmetric and in $VMO(\R^3)$. Then for any $\bm{f}\in\vL{p}$, $F\in\L{r(p)}$ and $g\in \Wfracb{-\frac{1}{p}}{p}$, there exists a unique solution $u\in \W{1}{p}$ of Problem (\ref{robin1}).
\end{theorem}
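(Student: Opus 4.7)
The plan is to treat the case $p=2$ directly by Lax-Milgram and then, for $p>2$, to rewrite Problem (\ref{robin1}) as a Neumann problem with modified boundary data $g-\alpha u$ and invoke the $W^{1,q}$-theory of the Neumann problem for $VMO$-coefficients on $\mathcal{C}^1$ domains cited from \cite{DK}, iterating until the target exponent $p$ is reached. Uniqueness follows from an energy identity: any two solutions differ by $w\in\W{1}{p}\subset\H{1}$ (using $p\ge 2$) solving the homogeneous problem, so $\int_\Omega A\nabla w\cdot\nabla w+\int_\Gamma\alpha w^2=0$, and uniform ellipticity together with $\alpha\ge\alpha_*>0$ force $w\equiv 0$.

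For $p=2$ I apply Lax-Milgram on $\H{1}$ to the bilinear form $a(u,v)=\int_\Omega A\nabla u\cdot\nabla v+\int_\Gamma\alpha uv$. Continuity of the boundary integral uses $\alpha\in\Lb{t(2)}=\Lb{2}$ together with the trace chain $\H{1}\hookrightarrow\Hfracb{1}{2}\hookrightarrow\Lb{4}$; coercivity follows from uniform ellipticity, $\alpha\ge\alpha_*$, and the generalized Poincar\'e inequality $\|u\|_{\H{1}}^2\le C(\|\nabla u\|_{\vL{2}}^2+\|u\|_{\Lb{2}}^2)$. Continuity of the linear form uses $\bm{f}\in\vL{2}$, $g\in\Wfracb{-\frac{1}{2}}{2}$, and $F\in\L{6/5}=\L{r(2)}$ via $\H{1}\hookrightarrow\L{6}$, yielding a unique $u\in\H{1}$.

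For $p>2$, since the data lie a fortiori in the corresponding $L^2$-classes, the previous step already produces an $H^1$-solution $u$. This $u$ solves the Neumann problem
\begin{equation*}
\div(A\nabla u)=\div\bm{f}+F\text{ in }\Omega,\qquad (A\nabla u-\bm{f})\cdot\vn=g-\alpha u\text{ on }\Gamma,
\end{equation*}
and I apply the Neumann $W^{1,q}$-regularity of \cite{DK}, which demands $g-\alpha u\in\Wfracb{-\frac{1}{q}}{q}$. The new term $\alpha u$ is estimated by duality: for $\phi\in\Wfracb{\frac{1}{q}}{q'}$, H\"older gives
\begin{equation*}
\Bigl|\int_\Gamma\alpha u\,\phi\Bigr|\le\|\alpha\|_{\Lb{t(q)}}\|u\|_{\Lb{a}}\|\phi\|_{\Lb{b}},\qquad \tfrac{1}{t(q)}+\tfrac{1}{a}+\tfrac{1}{b}=1,
\end{equation*}
where $b$ is produced by the Sobolev embedding of $\Wfracb{\frac{1}{q}}{q'}$ into $\Lb{b}$ on the two-dimensional $\Gamma$ and $\|u\|_{\Lb{a}}$ is controlled by the trace of $u$ in its current $W^{1,q}$-class. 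The exponent $t(q)$ in (\ref{def_exponent_tp_alpha}) is tailored so this bookkeeping closes, and after finitely many iterations $u\in\W{1}{p}$.

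The main obstacle is this boundary H\"older-Sobolev bookkeeping. Starting from $u|_\Gamma\in\Hfracb{1}{2}\hookrightarrow\Lb{4}$, the moderate range $q\in[\frac{3}{2},3]$ admits a one-step bootstrap with $t(q)=2+\varepsilon$; outside this range the embedding of $\Wfracb{\frac{1}{q}}{q'}$ into $\Lb{b}$ degrades and one must adopt the stronger integrability $t(q)=\frac{2}{3}\max\{q,q'\}+\varepsilon$ and iterate through intermediate exponents (for instance, once $u\in\W{1}{q_1}$ for some $q_1>3$ one has $u|_\Gamma\in\Lb{\infty}$ by Sobolev on $\Gamma$, after which any $p$ is reachable), gaining a uniform amount of boundary integrability at each pass so that finitely many steps suffice.
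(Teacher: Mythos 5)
Your proposal follows essentially the same route as the paper: Lax--Milgram on $\H{1}$ for $p=2$, then for $p>2$ treat (\ref{robin1}) as a Neumann problem with boundary datum $g-\alpha u$, invoke the $W^{1,q}$ Neumann regularity of \cite{DK}, and track the integrability of $\alpha u$ on $\Gamma$ via trace, H\"older and Sobolev embeddings until the target $p$ is reached, with uniqueness from the energy identity. One small imprecision: your claim that the range $q\in[\tfrac{3}{2},3]$ admits a \emph{one-step} bootstrap is not quite right --- with $t(q)=2+\varepsilon$ and $\varepsilon$ small, one step improves $p_1=\tfrac{3}{2}q_1$, $\tfrac{1}{q_1}=\tfrac{1}{4}+\tfrac{1}{2+\varepsilon}$, only marginally past $2$, so the paper iterates about $\lfloor\tfrac{1}{\varepsilon}-\tfrac{1}{2}\rfloor+1$ times there before reaching $W^{1,3}$, after which one more step handles any $p>3$ as you describe; your overall claim that finitely many passes suffice is nonetheless correct.
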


\begin{remark}
\rm{For $p=2$, $\Omega$ Lipschitz is sufficient to show the existence of solution $u\in \H1$}.
\end{remark}

\begin{proof}
It is trivial to see that $u\in\W{1}{p}$ is a solution of (\ref{robin1}) iff $u\in\W{1}{p}$ satisfies the following variational formulation:
\begin{equation}
\label{E}
\forall\varphi\in \W{1}{p'}, \quad \int\displaylimits_{\Omega}{A\nabla u\cdot \nabla \varphi} + \int\displaylimits_{\Gamma}{\alpha u \ \varphi}
= \int\displaylimits_\Omega{\bm{f}\cdot \nabla \varphi} - \int\displaylimits_\Omega {F \varphi}\,  + \left\langle  g, \varphi \right\rangle _{\Gamma}
\end{equation}
where $\left\langle \cdot,\cdot \right\rangle _{\Gamma}$ denotes the duality between $\Wfracb{-\frac{1}{p}}{p}$ and $ \Wfracb{\frac{1}{p}}{p'}$. Note that the boundary integral $\int\displaylimits_{\Gamma} \alpha u\ \varphi$ is well defined. For $p=2$, the bilinear form
$$\forall \ u,\varphi\in \H1, \quad a(u,\varphi) = \int\displaylimits_{\Omega}{A\nabla u\cdot \nabla \varphi} + \int\displaylimits_{\Gamma}{\alpha u \ \varphi}$$
is clearly continuous. Also, due to the ellipticity hypothesis on $A(x)$ and by Friedrich's inequality and the assumption $\alpha\ge \alpha_* > 0$ on $\Gamma$, we may have
$$a(u,u) = \int\displaylimits_{\Omega}{A \nabla u\cdot \nabla u} + \int\displaylimits_{\Gamma}{\alpha |u|^2}\ge C(\alpha_*, \mu) \ \|u\|^2_{\H1} $$
which shows that the bilinear form is coercive on $\H1$. And the right hand side of (\ref{E}) defines an element in the dual of $\H1$.
Thus, by Lax-Milgram lemma, there exists a unique $u\in\H1$ satisfying (\ref{E}). So we obtain the existence of a unique solution of (\ref{robin1}) in $\H1$.

Now for $p>2$, since $\vL{p}\hookrightarrow \vL{2}, \L{r(p)}\hookrightarrow \L{6/5}$,$\Wfracb{-\frac{1}{p}}{p} \hookrightarrow H^{-\frac{1}{2}}(\Gamma)$ and $\Lb{t(p)}\hookrightarrow \Lb{2}$, there exists a unique $u\in\H1$ solving (\ref{robin1}). It remains to show that $u\in\W{1}{p}$.

{\bf (i) $2<p\le 3$.} Since $u\in\H1 \hookrightarrow\Lb{4}$ and $\alpha\in\Lb{2+\varepsilon}$, we have $\alpha u\in\Lb{q_1}$ where $\frac{1}{q_1}=\frac{1}{4}+\frac{1}{2+\varepsilon}$. But using the Sobolev embedding $\Lb{q_1}\hookrightarrow\Wfracb{-\frac{1}{p_1}}{p_1}$ with $p_1=\frac{3}{2}q_1$ (since $q_1 > \frac{4}{3}$)\\ 
$$\text{i.e.}\quad \frac{1}{p_1}=\frac{2}{3}\left(\frac{1}{4}+\frac{1}{2+\varepsilon}\right),$$
Neumann regularity result (cf. \cite{DK}) implies $u\in\W{1}{p_1}$ since $\Omega$ is $\mathcal{C}^{1}$.
If $p_1\geq p$, we are done.
Otherwise, $u\in\W{1}{p_1}$.
Hence, $u\in\Lb{s_1}$ where $$\frac{1}{s_1}=\frac{1}{p_1}-\frac{1-\frac{1}{p_1}}{2}=\frac{3}{2p_1}-\frac{1}{2}$$\\
as $p_1<p \leq 3$. Then $\alpha u \in\Lb{q_2}$ where $\frac{1}{q_2}=\frac{1}{s_1}+\frac{1}{2+\varepsilon}$.
But, $\Lb{q_2}\hookrightarrow\Wfracb{-\frac{1}{p_2}}{p_2}$ with $p_2=\frac{3}{2}q_2$ \textit{i.e.}
$$ \frac{1}{p_2}=\frac{2}{3}\left(\frac{1}{4}+\frac{1}{2+\varepsilon}-\frac{1}{2}+\frac{1}{2+\varepsilon}\right)
=\frac{2}{3}\left(\frac{2}{2+\varepsilon}-\frac{1}{2}+\frac{1}{4}\right).
$$
If $p_2\geq p$, then as before, we have $u\in\W{1}{p} $. Otherwise, $u\in\W{1}{p_2}$.
Proceeding similarly, we get $u\in\W{1}{p_{k+1}}$ with
$$\frac{1}{p_{k+1}}=\frac{2}{3}\left(\frac{k+1}{2+\varepsilon}-\frac{k}{2}+\frac{1}{4}\right).$$
(where in each step, we assumed that $p_k<3$).
Now choosing $k=\lfloor\frac{1}{\varepsilon}-\frac{1}{2}\rfloor+1$ such that $p_{k+1}\geq3\geq p$ (where $\lfloor a\rfloor$ stands for the greatest integer less than or equal to $a$), we obtain $u\in\W{1}{p}$.\\
\\
{\bf (ii) $p>3$.} From the previous case, we obtain $u\in\W{1}{3}$ which gives $u\in\Lb{q}$ for all $1<q<\infty$.
But $\alpha\in\Lb{\frac{2}{3}p+\varepsilon}$ implies $\alpha u\in\Lb{\frac{2}{3}p}\hookrightarrow\Wfracb{-\frac{1}{p}}{p}$.
Therefore, using same reasoning as before, from the Neumann regularity result, we get $u\in\W{1}{p}$ .
\hfill
\end{proof}

Next we discuss the estimate of the solution of problem (\ref{robin1}) for $p>2$ with $F=0$ and $g=0$, independent of $\alpha$.
\begin{theorem}[{\bf \boldmath$\W{1}{p}$ estimate, $p\ge 2$ with RHS $\bm{f}$}]
\label{T1}
Let $\Omega$ be a $\mathcal{C}^1$ bounded domain in $\R^3$, $p\ge 2$  and $\bm{f}\in\vL{p}$. Suppose that the coefficients of the operator $\mathcal{L}$, defined in (\ref{10}), are symmetric and in $VMO(\R^3)$. Then the solution $u\in\W{1}{p}$ of (\ref{robin1}) with $F=0$ and $g=0$, satisfies the following estimate:
\begin{equation}
\label{Lp}
\|u\|_{\W{1}{p}} \le C_p(\Omega,\alpha_*) \ \|\bm{f} \|_{\vL{p}}
\end{equation}
where the constant $C_p(\Omega,\alpha_*)>0$ is independent of $\alpha$.
\end{theorem}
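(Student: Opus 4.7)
The plan is to bootstrap from the $L^2$ case (already $\alpha$-uniform) to $L^p$ for $p > 2$ by Shen's real-variable extrapolation argument, fed by the weak reverse H\"older inequality (wRHI) asserted in Theorem~\ref{T3}. As a starting point, I would test the variational identity \eqref{E} (with $F = 0$ and $g = 0$) against $\varphi = u$; the ellipticity of $A$ and the lower bound $\alpha \ge \alpha_* > 0$ on $\Gamma$ give, after Young's inequality,
$$\mu\|\nabla u\|_{L^2(\Omega)}^2 + \alpha_*\|u\|_{L^2(\Gamma)}^2 \;\le\; \frac{1}{2\mu}\|\mathbf{f}\|_{L^2(\Omega)}^2 + \frac{\mu}{2}\|\nabla u\|_{L^2(\Omega)}^2,$$
and combining with a Friedrich-type inequality anchored on the boundary $L^2$-norm yields $\|u\|_{H^1(\Omega)} \le C(\mu,\alpha_*)\|\mathbf{f}\|_{L^2(\Omega)}$, with constant depending on $\alpha$ only through $\alpha_*$.

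Next I would invoke Shen's $L^p$ theorem, whose hypotheses are (i) $L^2$-boundedness of the sublinear map $\mathbf{f}\mapsto|\nabla u|$ (just established) and (ii) a local wRHI: for every ball $B = B(x_0, r)$ with $x_0 \in \overline{\Omega}$ and $r \le r_0(\Omega)$, a decomposition $u = v + w$ on $2B \cap \Omega$ in which $v$ solves the \emph{homogeneous} Robin problem $\mathcal{L}v = 0$ in $2B \cap \Omega$, $(A\nabla v)\cdot\mathbf{n} + \alpha v = 0$ on $2B \cap \Gamma$, and satisfies
$$\biggl(\frac{1}{|B\cap\Omega|}\int_{B\cap\Omega}|\nabla v|^q\biggr)^{1/q} \;\le\; C\biggl(\frac{1}{|2B\cap\Omega|}\int_{2B\cap\Omega}|\nabla v|^2\biggr)^{1/2}$$
for some $q > p$, with $\nabla w$ controlled in $L^2$ locally by $\mathbf{f}$ through the elementary energy inequality on the inhomogeneous local problem. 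This wRHI is exactly the content of Theorem~\ref{T3}; in the $\mathcal{C}^1/VMO$ setting the exponent $q$ can be taken arbitrarily large via the Caffarelli--Peral-type approximation by frozen-coefficient operators from \cite{CP}, so Shen's theorem delivers $\|\nabla u\|_{L^p(\Omega)} \le C_p(\Omega,\alpha_*)\|\mathbf{f}\|_{L^p(\Omega)}$ for every $p \in [2,\infty)$.

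Passing from $\nabla u$ to $u$ in $L^p$ is routine: a Poincar\'e-type inequality using the boundary trace $\|u\|_{L^2(\Gamma)}$ already controlled above as anchor, combined with Sobolev embedding in the bounded $\mathcal{C}^1$ domain, upgrades to the full $W^{1,p}$-norm on the left. The main obstacle I anticipate is preserving $\alpha$-independence inside the wRHI of Theorem~\ref{T3}: the term $\alpha v|_\Gamma$ is the sole carrier of $\alpha$-dependence in the local homogeneous problem, and during the frozen-coefficient approximation of \cite{CP} it must be absorbed as a lower-order error whose size is controlled by $\|v\|_{L^2(\Gamma)}$ alone, \emph{without} an explicit factor of $\alpha$. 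The uniform $L^2$ trace control obtained in the initial step, propagated to every localised sub-problem, is what makes this absorption succeed with a constant depending on $\alpha$ only through $\alpha_*$; tracking this dependence carefully through Theorem~\ref{T3} is the heart of the argument.
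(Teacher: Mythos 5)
Your proposal follows essentially the same route as the paper's proof: an $\alpha$-uniform $L^2$ energy bound, the weak reverse H\"older inequality of Theorem~\ref{T3}, a decomposition $u = v + w$ via a cut-off of the data $\bm{f}$, and the Shen--Geng real-variable extrapolation (Lemma~\ref{G0}) to pass to $L^p$, $p > 2$. One point worth sharpening in your anticipated ``main obstacle'': in the VMO approximation of Lemma~\ref{Lapprox} the $\alpha$-boundary term cancels \emph{exactly} in the comparison $v - w$, because the frozen-coefficient approximant is assigned the \emph{same} Robin boundary data as $v$, so the residual $\alpha$-dependence in the wRHI constant enters only through the lower bound $\alpha \ge \alpha_*$ via a Friedrich-type norm equivalence, rather than through absorption of a lower-order error controlled by $\|v\|_{L^2(\Gamma)}$.
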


The proof of the above theorem is similar to that of Neumann problem \cite{geng}, once we have the wRHI. Since $\Omega$ is $\mathcal{C}^1$, there exists some $r_0>0$ such that for any $x_0\in\Gamma$, there exists a coordinate system $(x',x_3)$ which is isometric to the usual coordinate system and a $\mathcal{C}^1$ function $\psi : \R^2 \rightarrow \R$ so that,
$$
B(x_0,r_0)\cap \Omega = \left\lbrace (x',x_3)\in B(x_0,r_0) : x_3 > \psi(x')\right\rbrace
$$
and
$$
B(x_0,r_0)\cap \Gamma = \left\lbrace (x',x_3)\in B(x_0,r_0) : x_3 = \psi(x')\right\rbrace .
$$

In some places, we may write $B$ instead of $B(x,r)$ where there is no ambiguity and $aB := B(x, a r)$ for $a>0$. Also we use the usual notation to denote the average for any integrable function $f$ on a domain $\omega$:
$$
\dashint\displaylimits_{ \omega}{f} := \frac{1}{|\omega|}\int\displaylimits_{\omega}{f} := \overline{f}.
$$

We first prove the following weak reverse H\"{o}lder inequality for some $p=2+\varepsilon$, $\varepsilon>0$ whose proof is straight forward but this is not sufficient to deduce Theorem \ref{T1}.

\begin{lemma}
\label{Lrhi}
Let $\Omega$ be a $\mathcal{C}^1$ bounded domain in $\R^3$ and $\mathcal{L}$ be the operator defined in (\ref{10}). For any $B(x,r)$ with the property that $0<r<\frac{r_0}{8}$ and either $B(x,2r)\subset \Omega$ or $ x\in\Gamma$, the following weak Reverse H\"{o}lder inequalities hold: for some $\varepsilon>0$,\\
(i) if $B(x,2r) \subset \Omega$,
\begin{equation}
\label{rhi}
\left( \pvint\displaylimits_{ \ B(x,r)} {|\nabla v|^{2+\varepsilon}}\right) ^{1/2+\varepsilon} \le C \left( \pvint\displaylimits_{ \,\,B(x,2r)} {|\nabla v|^2}\right) ^{1/2} 
\end{equation}
whenever $v\in H^1(B(x,2r))$ satisfies $\mathcal{L} v = 0 \textup{ in } B(x,2r)$.\\
(ii) if $x\in\Gamma$,
\begin{equation}
\label{rhi1}
\left( \ppvint\displaylimits_{ \ B(x,r)\cap \Omega} {(|v|^2+|\nabla v|^2)^{\frac{2+\varepsilon}{2}}} \right)^{1/2+\varepsilon} \le C \left( \ppvint\displaylimits_{B(x,2r)\cap \Omega} {(|v|^2+|\nabla v|^2)}\right)^{1/2}
\end{equation}
whenever $v\in H^1(B(x,2r)\cap \Omega)$ satisfies
\begin{equation}
\label{erhi}
\begin{cases}
\mathcal{L} v &= 0 \quad  \textup{ in } \ B(x,2r)\cap\Omega\\
A\nabla v\cdot \vn + \alpha v &= 0 \quad \textup{ on } \ B(x,2r)\cap\Gamma.
\end{cases}
\end{equation}
The constants $C>0$ in the above estimates are independent of $\alpha$.
\end{lemma}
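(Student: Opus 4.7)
My plan is to follow the classical Gehring--Giaquinta scheme in both cases: establish a Caccioppoli-type energy inequality with a well-chosen test function, combine it with the scale-invariant Sobolev--Poincar\'e embedding $W^{1,6/5}\hookrightarrow L^2$ in $\mathbb R^3$, and then invoke Gehring's self-improving lemma to upgrade the resulting $L^2$ bound to $L^{2+\varepsilon}$ for some $\varepsilon>0$. For the interior case \textbf{(i)} I would pick $\eta\in C_c^\infty(B(x,2r))$ with $\eta\equiv 1$ on $B(x,r)$ and $|\nabla\eta|\lesssim 1/r$, then test $\mathcal{L}v=0$ against $\phi=\eta^2(v-\overline{v})$ with $\overline{v}=\dashint_{B(x,2r)}v$; since $\phi$ is compactly supported in the interior, no boundary integral arises and the mean may be freely subtracted. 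Ellipticity together with Young's inequality produce the Caccioppoli bound $\int_{B(x,r)}|\nabla v|^2\lesssim r^{-2}\!\int_{B(x,2r)}(v-\overline{v})^2$, and combining with the critical, scale-invariant $\|v-\overline{v}\|_{L^2(2B)}\le C\|\nabla v\|_{L^{6/5}(2B)}$ yields the reverse H\"older inequality $(\dashint_B|\nabla v|^2)^{1/2}\le C(\dashint_{2B}|\nabla v|^{6/5})^{5/6}$, to which Gehring's lemma applies to give \eqref{rhi}.

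For the boundary case \textbf{(ii)}, the Robin condition forbids subtracting a constant, so I would test the variational formulation with $\phi=\eta^2 v$ instead. Expanding $\int\eta^2 A\nabla v\cdot\nabla v+2\int\eta v\,A\nabla v\cdot\nabla\eta+\int_\Gamma\alpha\eta^2 v^2=0$, absorbing the cross term via Young, and dropping the non-negative boundary integral (valid since $\alpha\ge\alpha_*>0$), I obtain
\[
 \int_{B(x,r)\cap\Omega}|\nabla v|^2\ \lesssim\ r^{-2}\!\int_{B(x,2r)\cap\Omega}|v|^2.
\]
Because $\Omega\in\mathcal{C}^1$ and $r<r_0/8$, the piece $B(x,2r)\cap\Omega$ is, after rescaling by $r$, a uniformly Lipschitz domain, so the averaged Sobolev embedding reads
\[
 \Big(\dashint_{2B\cap\Omega}\!v^2\Big)^{\!1/2}\le C\Big(\dashint_{2B\cap\Omega}\!v^{6/5}\Big)^{\!5/6}+C\,r\Big(\dashint_{2B\cap\Omega}\!|\nabla v|^{6/5}\Big)^{\!5/6}.
\]
Combining the above Caccioppoli with this Sobolev bound, together with the trivial $\dashint_B v^2\lesssim\dashint_{2B}v^2$, and exploiting $|v|^{6/5}+|\nabla v|^{6/5}\sim H^{6/5}$ where $H:=(|v|^2+|\nabla v|^2)^{1/2}$, I expect to arrive at the reverse H\"older $(\dashint_{B\cap\Omega}H^2)^{1/2}\le C(\dashint_{2B\cap\Omega}H^{6/5})^{5/6}$; Gehring's lemma then delivers \eqref{rhi1}.

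The hard part will be step \textbf{(ii)}: because we cannot subtract a mean in the Robin setting, Caccioppoli produces $|v|^2$ (rather than $|v-c|^2$) on the right, so both $v$ and $\nabla v$ must appear in the reverse H\"older, and the factors of $r$ coming from Caccioppoli on the one hand and from the averaging form of Sobolev on the other must be arranged to balance out. The critical Sobolev pair $(6/5,2)$ in dimension three is precisely what makes this bookkeeping close, yielding a genuine reverse H\"older for $H$ with a constant depending on $\Omega,\mu,\alpha_*$ and $r_0$ but not on the function $\alpha$ itself.
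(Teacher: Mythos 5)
Your interior case \textbf{(i)} matches the paper's argument and is fine: Caccioppoli against $\eta^2(v-\overline v)$, the scale-invariant Sobolev--Poincar\'e inequality with the critical pair $(6/5,2)$, and Gehring. The boundary case \textbf{(ii)}, however, has a genuine gap, precisely at the step you flag as ``the hard part.''

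After testing with $\eta^2 v$ you propose to \emph{drop} the non-negative boundary integral $\int_{2B\cap\Gamma}\alpha\eta^2 v^2$, retaining only
\begin{equation*}
\int\displaylimits_{B\cap\Omega}|\nabla v|^2\ \le\ \frac{C}{r^2}\int\displaylimits_{2B\cap\Omega}|v|^2 .
\end{equation*}
This puts $|v|^2$ on the right with a prefactor $r^{-2}$ and gives you no control of $|v|^2$ on the left. You then try to close via the averaged Sobolev inequality
$\left(\dashint_{2B\cap\Omega}v^2\right)^{1/2}\le C\left(\dashint_{2B\cap\Omega}v^{6/5}\right)^{5/6}+C\,r\left(\dashint_{2B\cap\Omega}|\nabla v|^{6/5}\right)^{5/6}$.
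Squaring and inserting, the gradient piece contributes $C\left(\dashint_{2B\cap\Omega}|\nabla v|^{6/5}\right)^{5/3}$ --- its $r^2$ factor cancels against the $r^{-2}$ from Caccioppoli --- but the $v$ piece contributes $C\,r^{-2}\left(\dashint_{2B\cap\Omega}|v|^{6/5}\right)^{5/3}$: there is no matching $r^2$, the factor $r^{-2}$ survives, and you are left with
\begin{equation*}
\dashint\displaylimits_{B\cap\Omega}|\nabla v|^2\ \le\ \frac{C}{r^2}\left(\dashint\displaylimits_{2B\cap\Omega}|v|^{6/5}\right)^{5/3}+C\left(\dashint\displaylimits_{2B\cap\Omega}|\nabla v|^{6/5}\right)^{5/3},
\end{equation*}
which is not scale-invariant, so Gehring's lemma does not apply. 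The ``trivial'' $\dashint_{B\cap\Omega}v^2\le C\dashint_{2B\cap\Omega}v^2$ does not rescue this: it is an $L^2$--$L^2$ comparison, not the $L^2$--$L^{6/5}$ type bound a reverse H\"older requires, so it contributes nothing toward the structure you need for Gehring.

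The paper does the opposite of dropping the boundary integral: it \emph{exploits} it. The hypothesis $\alpha\ge\alpha_*>0$ is used here to convert the retained boundary term, via $\int_{B\cap\Gamma}v^2\le\alpha_*^{-1}\int_{B\cap\Gamma}\alpha v^2$, and then the Friedrichs-type inequality $\|v\|^2_{H^1(B\cap\Omega)}\le C\left(\int_{B\cap\Omega}|\nabla v|^2+\int_{B\cap\Gamma}v^2\right)$ brings the full quantity $\int_{B\cap\Omega}(|v|^2+|\nabla v|^2)$ onto the \emph{left} of the Caccioppoli estimate:
\begin{equation*}
\int\displaylimits_{B\cap\Omega}\left(|v|^2+|\nabla v|^2\right)\ \le\ \frac{C(\alpha_*)}{r^2}\int\displaylimits_{2B\cap\Omega}|v|^2 ,
\end{equation*}
and only then is the Sobolev step applied. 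That is the mechanism missing from your proposal. Keep the boundary term, pair it with the Friedrichs inequality on $B\cap\Omega$, and you obtain a reverse H\"older inequality for $(|v|^2+|\nabla v|^2)^{1/2}$ rather than just for $|\nabla v|$ with an uncancelled $r^{-2}$.
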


\begin{proof}
The proof of the weak Reverse H\"{o}lder inequality for Robin problem follows the similar argument as for the Dirichlet problem, established in \cite{giaquinta}.\\

{\bf case(i) :} $2B\subset \Omega$.\\
Since $v$ satisfies the equation $\div (A(x)\nabla) v =0$ in $2B$, we can have the following Caccioppoli inequality,
\begin{equation*}
\int\displaylimits_{B}{|\nabla v|^2} \le \frac{C}{r^2}\int\displaylimits_{2B}{|v-\overline{v}|^2}, \qquad \overline{v} = \frac{1}{|2B|} \int\displaylimits_{2B}{v}
\end{equation*}
for some constant $C>0$ independent of $\alpha$. Now using the following Sobolev-Poincar\'{e} inequality, for any $w \in \W{1}{p}$, $p>1$,
\begin{equation*}
\|w-\overline{w}\|_{\L{p^*}} \le C \|\nabla w\|_{\L{p}}, \qquad \overline{w} = \frac{1}{|\Omega|} \int\displaylimits_{\Omega}{w}
\end{equation*}
where $p^*$ is the Sobolev exponent, we obtain,
\begin{equation*}
\int\displaylimits_{B}{|\nabla v|^2} \le \frac{C}{r^2}\left( \ \int\displaylimits_{2B}{|\nabla v|^{\tilde{q}}}\right) ^{2/\tilde{q}}
\end{equation*}
with $\tilde{q}=6/5$ (this value comes from the dimension $n=3$). Upon normalizing both sides, we can write,
\begin{equation*}
\left( \frac{1}{r^3}\int\displaylimits_{B}{|\nabla v|^2}\right) ^{1/2} \le C \left( \frac{1}{r^3} \int\displaylimits_{2B}{|\nabla v|^{\tilde{q}}}\right) ^{1/\tilde{q}}.
\end{equation*}	
Here note that in $\R^3$, $|B| = c r^3$. Then setting $g=|\nabla v|^{\tilde{q}}$ and $q = 5/3 = 2/\tilde{q}$, we have,
\begin{equation*}
 \frac{1}{r^3}\int\displaylimits_{B} {g^{q}} \le C \left( \frac{1}{r^3}\int\displaylimits_{2B} {g}\right)^{q}.
\end{equation*}
Hence, \cite[Proposition 1.1]{giaquinta} with $f=0$ and $\theta=0$ implies, for some $\varepsilon>0$,
\begin{equation*}
\left( \ \dashint\displaylimits_{B}{|\nabla v|^{2+\varepsilon}}\right) ^{1/2+\varepsilon} \le C \left( \ \dashint\displaylimits_{2B}{|\nabla v|^{2}}\right) ^{1/2}.
\end{equation*}
	
{\bf case(ii) :} $x\in\Gamma$.\\
We follow the same path of the above interior estimate. First we want to prove a Caccioppoli type inequality for the problem (\ref{erhi}) up to the boundary. For that, let $\eta \in C_c^{\infty}(2B)$ be a cut-off function such that
$$
0\le \eta \le 1, \quad \eta \equiv 1 \ \text{ on } B \quad \text{ and } \quad |\nabla \eta| \le \frac{C}{r} .
$$
Now multiplying (\ref{erhi}) by $\eta ^2 v$ and integrating by parts, we get,
\begin{equation*}
\int\displaylimits_{2B\cap \Omega} {A\nabla v \cdot \nabla (\eta ^2 v)} + \int\displaylimits_{\partial (2B \cap \Omega)}{\alpha \eta ^2 v^2} = 0
\end{equation*}
which yields,
\begin{equation*}
\mu \int\displaylimits_{2B\cap \Omega}{\eta ^2 |\nabla v|^2 } +\int \displaylimits_{2B\cap \Gamma}{\alpha \eta ^2 v^2} \le \int\displaylimits_{2B\cap \Omega}{\eta ^2 A(x)\nabla v \cdot \nabla v} +\int \displaylimits_{2B\cap \Gamma}{\alpha \eta ^2 v^2} = - 2\int\displaylimits_{2B\cap \Omega}{\eta v \nabla v\cdot \nabla \eta}.
\end{equation*}
Using Cauchy's inequality on the right hand side, we obtain,
\begin{equation*}
\int\displaylimits_{2B\cap \Omega}{|\nabla v|^2 \eta ^2} + \int \displaylimits_{2B\cap \Gamma}{\alpha \eta ^2 v^2} \le 2 \left[  \frac{1}{4}\int\displaylimits_{2B\cap \Omega}{\eta^2 |\nabla v|^2} + 4 \int\displaylimits_{2B\cap \Omega} {v^2 |\nabla \eta|^2}\right] .
\end{equation*}
Simplifying the above estimate gives
\begin{equation*}
\int\displaylimits_{2B\cap \Omega}{|\nabla v|^2 \eta ^2} + \int \displaylimits_{2B\cap \Gamma}{\alpha \eta ^2 v^2} \le C \int\displaylimits_{2B\cap \Omega} {v^2 |\nabla \eta|^2},
\end{equation*}
which yields the Caccioppoli-type inequality, up to the boundary,
\begin{equation}
\label{caccio}
\int\displaylimits_{B\cap \Omega}{|\nabla v|^2 } +\int \displaylimits_{B\cap \Gamma}{\alpha v^2} \le \int\displaylimits_{2B\cap \Omega}{|\nabla v|^2 \eta ^2} + \int \displaylimits_{2B\cap \Gamma}{\alpha \eta ^2 v^2} \le \frac{C}{r^2} \int\displaylimits_{2B\cap \Omega} {v^2} .
\end{equation}
But we also have,
\begin{equation*}
\|v\|^2_{H^1(B\cap\Omega)} \le C \left( \ \int\displaylimits_{B\cap \Omega}{|\nabla v|^2 } +\int \displaylimits_{B\cap \Gamma}{ v^2}\right) \le C(\alpha_*) \left( \ \int\displaylimits_{B\cap \Omega}{|\nabla v|^2 } +\int \displaylimits_{B\cap \Gamma}{\alpha v^2}\right).
\end{equation*}
Hence, using (\ref{caccio}), we obtain,
\begin{equation*}
\int\displaylimits_{B\cap \Omega}{(|v|^2+|\nabla v|^2)} \le \frac{C(\alpha_*)}{r^2} \int\displaylimits_{2B\cap \Omega} {|v|^2} \le \frac{C(\alpha_*)}{r^2} \left( \ \int\displaylimits_{2B\cap \Omega}{(|v|^2+|\nabla v|^2)^{\tilde{q}/2}}\right) ^{2/\tilde{q}}
\end{equation*}
with $\tilde{q} = 6/5$ so that $(\tilde{q})^* = 2$. Thus,
\begin{equation*}
\begin{aligned}
\frac{1}{r^3}\int\displaylimits_{B\cap \Omega}{(|v|^2+|\nabla v|^2) } &\le \frac{C(\alpha_*)}{r^5} \left( \ \int\displaylimits_{2B\cap \Omega} {(|v|^2+|\nabla v|^2)^{\tilde{q}/2}}\right) ^{2/\tilde{q}}\\
&= C(\alpha_*) \left( \frac{1}{r^3} \int\displaylimits_{2B\cap \Omega} {(|v|^2+|\nabla v|^2)^{\tilde{q}/2}}\right) ^{2/\tilde{q}}.
\end{aligned}
\end{equation*}
Now setting,
\begin{equation*}
g(y) =
\begin{cases}
(|v|^2+|\nabla v|^2)^{\tilde{q}/2} \quad &\text{ if } \ y\in 2B\cap \Omega \\
 0 \quad &\text{ if } \ y\in 2B\setminus\Omega
\end{cases}
\end{equation*}
and $q = 2/\tilde{q}$, we obtain,
\begin{equation*}
 \frac{1}{r^3}\int\displaylimits_{B} {g^{q}} \le C(\alpha_*) \left( \frac{1}{r^3}\int\displaylimits_{2B} {g}\right)^{q} .
\end{equation*}
Once again \cite[Proposition 1.1]{giaquinta} with $f=0$ and $\theta =0$ implies, for some $\varepsilon>0$,
\begin{equation*}
\left( \ \dashint\displaylimits_{B} {g^{q+\varepsilon}} \right)^{1/q+\varepsilon} \le C \left( \ \dashint\displaylimits_{2B} {g^{q}}\right)^{1/q}
\end{equation*}
\textit{i.e.}
\begin{equation*}
\left( \pvint\displaylimits_{\ \ \ B\cap \Omega} {(|v|^2+|\nabla v|^2)^{(q+\varepsilon)\tilde{q}/2}} \right)^{1/q+\varepsilon} \le C \left( \pvint\displaylimits_{\ \,\, 2B\cap \Omega} {(|v|^2+|\nabla v|^2)}\right)^{\tilde{q}/2}
\end{equation*}
or equivalently, for some $s>2$,
\begin{equation*}
\left( \pvint\displaylimits_{\ \ \ B\cap \Omega} {(|v|^2+|\nabla v|^2)^{s/2}} \right)^{1/s} \le C \left( \pvint\displaylimits_{\ \,\,2B\cap \Omega} {(|v|^2+|\nabla v|^2)}\right)^{1/2}.
\end{equation*}
\hfill
\end{proof}
	
Next we prove wRHI for all $p>2$. For that we state the following boundary H\"{o}lder estimate for $\mathcal{L}$ under Robin boundary condition.
	
\begin{theorem}
		Let $\Omega$ be a $\mathcal{C}^1$ bounded domain in $\R^3$, $p>1$ and $\gamma \in (0,1)$. Suppose that the operator $\mathcal{L}$ defined in (\ref{10}) has constant and symmetric coefficients and
		\begin{equation*}
		\begin{cases}
		\mathcal{L} v &= 0 \quad  \textup{ in } \ B(Q,r)\cap\Omega\\
		A\nabla v\cdot \vn  + \alpha v &= 0 \quad \textup{ on } \ B(Q,r)\cap\Gamma
		\end{cases}
		\end{equation*}
		for some $Q\in \Gamma$ and $0<r<r_0$, Then for any $x,y \in B(Q,r/2)\cap \Omega$,
		\begin{equation}
		\label{bhe}
		|v(x)-v(y)|\le C \left( \frac{|x-y|}{r}\right) ^\gamma \left( \pppvint\displaylimits_{ \ \,\, B(Q,r)\cap\Omega}{|v|^p} \right) ^{1/p}
		\end{equation}
		where $C>0$ depends only on $\Omega, p$ and the ellipticity constant $\mu$, but independent of $\alpha$.
	\end{theorem}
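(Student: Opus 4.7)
The plan is to obtain the boundary Hölder estimate by a two-step procedure, both steps engineered to be uniform in $\alpha$ by exploiting the positivity of $\alpha v\cdot v$ in the boundary integrals. Since $A$ is constant, symmetric and positive definite, the linear change of variables $y=A^{-1/2}x$ reduces $\mathcal L$ to the Laplacian on a new $\mathcal C^1$ domain and transforms the Robin condition into one of the same structure $\partial_{\tilde{\vn}}\tilde v+\tilde\alpha\tilde v=0$ with $\tilde\alpha\ge 0$. I may therefore assume $\mathcal L=\Delta$.

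I would first obtain an $\alpha$-independent $L^\infty$ bound on $v$ by Moser iteration: testing the weak formulation against $|v|^{2\beta-2}v\,\eta^2$ with $\eta\in\mathcal C^\infty_c(B(Q,r))$ and $\beta\ge 1$, the boundary contribution $\int_{\Gamma}\alpha|v|^{2\beta}\eta^2\ge 0$ can be discarded, leaving exactly the Caccioppoli inequality of the Neumann case, and the usual Moser iteration combined with the Sobolev embedding yields
$$\sup_{B(Q,r/2)\cap\Omega}|v|\le C\Bigl(\dashint_{B(Q,r)\cap\Omega}|v|^p\Bigr)^{1/p}$$
with $C$ independent of $\alpha$. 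I would then establish an oscillation decay
$$\mathrm{osc}_{B(Q,\theta r)\cap\Omega}\,v\le\lambda\,\mathrm{osc}_{B(Q,r)\cap\Omega}\,v,\qquad \theta,\lambda\in(0,1),$$
by testing against the truncations $(v-k)_+\eta^2$ and, symmetrically for $-v$, $(v-k)_-\eta^2$, with $k\ge 0$; the boundary contribution $\int_{\Gamma}\alpha\,v(v-k)_+\eta^2\ge 0$ is again dropped (since $v\ge k\ge 0$ on $\{(v-k)_+>0\}$), yielding an $\alpha$-independent Caccioppoli inequality for the subsolution $(v-k)_+$ to which De Giorgi's $L^\infty$ estimate for subsolutions vanishing on sets of positive measure applies.

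The main obstacle is to reach arbitrary $\gamma\in(0,1)$, since the De Giorgi iteration alone yields only some typically small exponent $\gamma_0>0$. To upgrade this, I would use a Campanato-type decay: at each scale $\rho$ I compare $v$ to the solution $w$ of the Robin problem on the tangent half-space at $Q$, for which an explicit Poisson-type representation provides $\mathcal C^{0,\gamma}$ decay of arbitrary order less than $1$ with constants independent of $\tilde\alpha$ (again through the sign of the boundary term), and I exploit the $\mathcal C^1$ regularity of $\Gamma$ to control $\|v-w\|_{L^\infty(B(Q,\rho)\cap\Omega)}$ by a remainder that decays sufficiently fast in $\rho$ to close the iteration. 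A secondary technicality is that the natural truncation level $k=\tfrac12(\sup v+\inf v)$ may have the wrong sign; I would handle this by case-splitting on whether $0\in[\inf v,\sup v]$ and, when necessary, shifting $v$ by a constant of the appropriate sign, the resulting inhomogeneous Robin term then carrying the sign needed to be absorbed in the Caccioppoli estimate rather than destroying its $\alpha$-independence.
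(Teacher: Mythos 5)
Your overall strategy---reduce to the Laplacian, then lean on the favorable sign of the Robin boundary integral so that Caccioppoli-type estimates are $\alpha$-independent---is the right structural observation, and in fact it is exactly what the paper is implicitly invoking when it dispatches this lemma with a one-line citation to Gilbarg--Trudinger Theorem~8.27. Your steps~1 (change of variables to $\Delta$) and 2 (Moser iteration for an $\alpha$-uniform $L^\infty$ bound) are correct. However, both of the remaining steps contain genuine gaps rather than mere technicalities.

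In step~3, the sign problem you flag is not a ``secondary technicality''---it is fatal as proposed. Shifting $v\mapsto v-c$ turns the boundary condition into $\partial_{\vn}(v-c)+\alpha(v-c)=-\alpha c$, and testing against $((v-c)-k)_+\eta^2$ then produces the boundary term $\int_\Gamma \alpha\,v\,((v-c)-k)_+\,\eta^2$, whose sign on $\{(v-c)>k\}$ is the sign of $v$ there, not something you control by choosing $c$ and $k$. Your assertion that the inhomogeneous Robin term ``carries the sign needed to be absorbed'' is simply unsubstantiated; the De Giorgi measure dichotomy (whether $|\{v\ge(M+m)/2\}|$ or its complement is large) is decided by $v$ and need not align with the sign of the midlevel $(M+m)/2$, so you cannot guarantee nonnegative truncation levels for the side of the dichotomy you are forced into. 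In step~4, which is really where the content of the theorem lives, the key claim---that the mixed Robin/Dirichlet problem on a half-ball admits an ``explicit Poisson-type representation'' yielding $\mathcal{C}^{0,\gamma}$ decay for arbitrary $\gamma<1$ with constants uniform in $\alpha$---is left unproved, and it is not automatic ``through the sign of the boundary term.'' The Robin kernel depends on $\alpha$, and after rescaling to $B_1$ the effective parameter is $\alpha r$, which ranges over all of $[0,\infty)$; the problem is Neumann-like when $\alpha r\ll 1$ and Dirichlet-like when $\alpha r\gg 1$, and an $\alpha$-uniform Campanato decay must handle this crossover explicitly. A cleaner route to close the argument would be to avoid De Giorgi on $v$ entirely and instead compare, at each scale, $v$ to the Neumann solution $v_N$ with the same Dirichlet data on the spherical cap; the difference solves a Neumann problem with boundary flux $\alpha v|_\Gamma$, which the Caccioppoli bound $\|\sqrt{\alpha}\,v\|_{L^2(\Gamma\cap B_\rho)}\lesssim \rho^{-1}\|v\|_{L^2(B_{2\rho}\cap\Omega)}$ controls, and one must verify that this produces enough decay to iterate. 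As written, however, your proposal does not establish the claimed estimate.
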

	
	\begin{proof}
		Follows from classical regularity theory (for example, see \cite[Theorem 8.27]{GT}).
		\hfill
	\end{proof}
	
	Now the weak reverse H\"{o}lder inequality for any $p>2$ is proved in the case of constant coefficients.
	
	\begin{lemma}
		\label{Lrhi.}
		Let $\Omega$ be a $\mathcal{C}^1$ bounded domain in $\R^3$ and $p\ge 2$. Suppose that $\mathcal{L}$, defined in (\ref{10}), has constant and symmetric coefficients. Then for any $B(x,r)$ with the property that $0<r<\frac{r_0}{8}$ and either $B(x,2r)\subset \Omega$ or $ x\in\Gamma$, the following weak Reverse H\"{o}lder inequalities hold:\\
		(i) if $B(x,2r) \subset \Omega$,
		\begin{equation}
		\label{rhi.}
		\left( \pvint\displaylimits_{\,\,\,B(x,r)} {|\nabla v|^p}\right) ^{1/p} \le C \left( \ \pvint\displaylimits_{B(x,2r)} {|\nabla v|^2}\right) ^{1/2} 
		\end{equation}
		whenever $v\in H^1(B(x,2r))$ satisfies $\mathcal{L} v = 0 \textup{ in } B(x,2r)$.\\
		(ii) if $x\in\Gamma$,
		\begin{equation}
		\label{rhi1.}
		\left( \ppvint\displaylimits_{\,\,\,B(x,r)\cap \Omega} {|\nabla v|^p +|v|^p} \right)^{1/p} \le C \left( \ \ppvint\displaylimits_{B(x,2r)\cap \Omega} {|\nabla v|^2 + |v|^2}\right)^{1/2}
		\end{equation}
		whenever $v\in H^1(B(x,2r)\cap \Omega)$ satisfies
		\begin{equation*}
		\begin{cases}
		\mathcal{L} v &= 0 \quad  \textup{ in } \ B(x,2r)\cap\Omega\\
		A\nabla v\cdot \vn + \alpha v &= 0 \quad \textup{ on } \ B(x,2r)\cap\Gamma (\text{ if } x \in \Gamma).
		\end{cases}
		\end{equation*}
		The constant $C>0$ at most depends on $\Omega, p$ and the ellipticity constant $\mu$.
	\end{lemma}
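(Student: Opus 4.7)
My plan is to handle cases (i) and (ii) separately, exploiting the strong regularity enjoyed by constant-coefficient elliptic operators together with the boundary Hölder estimate (\ref{bhe}) just recorded.

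For the interior estimate (i), since $\mathcal{L}$ has constant coefficients and $\mathcal{L} v=0$ in $B(x,2r)$, each partial derivative $\partial_k v$ again satisfies the same homogeneous equation in $B(x,2r)$. A classical interior $L^\infty$--$L^2$ mean-value inequality for solutions of constant-coefficient divergence-form equations, applied to each $\partial_k v$ on a chain of shrunken concentric balls, yields
\[
\|\nabla v\|_{L^\infty(B(x,r))} \le C \left(\dashint\displaylimits_{\,\, B(x,2r)} |\nabla v|^2\right)^{1/2},
\]
from which (\ref{rhi.}) follows for every $p\ge 2$, since any $L^p$-average is dominated by the supremum.

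For the boundary estimate (ii), the strategy is to extract the improved integrability from (\ref{bhe}). Applied at $Q=x$ with $\gamma$ close to $1$ and rescaled from $B(x,r)$ to $B(x,2r)$, inequality (\ref{bhe}) immediately produces
\[
\|v\|_{L^\infty(B(x,r)\cap\Omega)} \le C\left(\dashint\displaylimits_{\,\,B(x,2r)\cap\Omega} |v|^2\right)^{1/2},
\]
which handles the $|v|^p$ contribution in (\ref{rhi1.}). For the gradient term, the plan is to combine (\ref{bhe}) with the boundary Caccioppoli inequality (\ref{caccio}) obtained in the proof of Lemma~\ref{Lrhi}: for any $y \in B(x,r)\cap\overline{\Omega}$ and $\rho$ small, subtracting $c = v(y)$ from $v$ and invoking the Hölder control of $v - c$ afforded by (\ref{bhe}) gives $\int_{B(y,\rho)\cap\Omega}|\nabla v|^2\le C\rho^{1+2\gamma}\cdot(\mathrm{average})$. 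This Campanato-type decay of the gradient, combined with the self-improving wRHI already established for exponent $2+\varepsilon$ in Lemma~\ref{Lrhi} and a standard Whitney/Gehring-type covering argument, is what I expect to promote the $L^2$ bound into the full $L^p$ bound (\ref{rhi1.}) for arbitrary $p\ge 2$. The interior bound from step (i) applied to $v$ on sub-balls strictly inside $\Omega$ takes care of the interior portions of the covering.

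The main obstacle lies in the gradient upgrade at the boundary. Because $\alpha$ is only assumed to lie in $L^{t(p)}(\Gamma)$, we cannot tangentially differentiate the Robin condition to produce a closed Robin-type problem for $\partial_\tau v$ with controllable data, as one does in the Neumann or Dirichlet case with smooth coefficients; consequently every gain in integrability of $\nabla v$ must be drawn from the Hölder regularity of $v$ supplied by (\ref{bhe}) and transferred to the gradient through (\ref{caccio}). This indirect passage is the delicate step, and it is also where one must carefully track the $\alpha_*$-dependence of the Caccioppoli constant to ensure the final bound is independent of $\alpha$.
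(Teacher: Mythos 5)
Your case (i) is correct and matches the paper's argument: the interior mean--value estimate for constant-coefficient (hence, after a linear change of variables, harmonic) solutions gives $\|\nabla v\|_{L^\infty(B(x,r))}\lesssim (\dashint_{B(x,2r)}|\nabla v|^2)^{1/2}$, and the $L^p$ bound follows for every $p\ge 2$. The $|v|^p$ part of case (ii), obtained by combining (\ref{bhe}) with control of $|v(y)|$ at a single point, is also fine.

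The gradient part of case (ii) has two genuine gaps, and this is precisely where the paper's argument is subtler than what you propose. First, you cannot apply the boundary Caccioppoli inequality (\ref{caccio}) to $v-v(y)$. Unlike the Dirichlet or Neumann cases, the Robin boundary condition is not translation-invariant: if $A\nabla v\cdot\vn+\alpha v=0$, then $w=v-c$ satisfies $A\nabla w\cdot\vn+\alpha w=-\alpha c$, and repeating the Caccioppoli computation produces an extra term $\int_{\Gamma}\alpha\,c\,\eta^2 w$ on the right-hand side which, after absorbing, contributes a quantity of order $|c|^2\int_{2B\cap\Gamma}\alpha$. This term cannot be controlled uniformly in $\alpha$ and destroys exactly the $\alpha$-independence the lemma is designed to achieve. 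Second, even granting the Campanato decay $\int_{B(y,\rho)\cap\Omega}|\nabla v|^2\lesssim\rho^{1+2\gamma}$, combining it with a Gehring-type self-improvement as you suggest only promotes $L^2$ to $L^{2+\varepsilon}$ for some small $\varepsilon>0$; it does not reach an arbitrary $p\ge 2$, and the Morrey condition $\nabla v\in L^{2,\lambda}$ for $\lambda<3$ does not embed into $L^p$ for large $p$ either.

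The paper's mechanism is different and avoids both obstacles. It applies the interior gradient estimate for harmonic functions to $u=v-v(y)$ on a ball $B(x,c\delta(x))$ strictly interior to $\Omega$ (so the Robin condition never enters when the constant is subtracted), yielding $|\nabla v(x)|\le\frac{C}{\delta(x)}(\dashint_{B(x,c\delta(x))}|v-v(y)|^2)^{1/2}$. The boundary H\"older estimate (\ref{bhe}) then bounds $|v(z)-v(y)|\lesssim(|z-y|/r)^{\gamma}(\dashint_{B(x_0,2r)\cap\Omega}|v|^2)^{1/2}$, which, after the explicit computation of $\int_{B(x,2c\delta(x))}|z-y|^{2\gamma}\,dz$, gives the pointwise bound
\begin{equation*}
|\nabla v(x)|\le C_\gamma\Bigl(\frac{r}{\delta(x)}\Bigr)^{1-\gamma}\Bigl(\ \pppvint\displaylimits_{B(x_0,2r)\cap\Omega}|\nabla v|^2+|v|^2\Bigr)^{1/2}.
\end{equation*}
Taking $\gamma$ close to $1$ so that $p(1-\gamma)<1$ makes the singularity $\delta(x)^{-p(1-\gamma)}$ integrable over $B(x_0,r)\cap\Omega$ (for a $\mathcal{C}^1$ domain $\int_{B(x_0,r)\cap\Omega}\delta^{-\beta}\lesssim r^{3-\beta}$ when $\beta<1$), and the wRHI (\ref{rhi1.}) follows for any $p\ge 2$ by direct integration. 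This pointwise blow-up estimate, with a power that can be made arbitrarily small, is the ingredient your proposal is missing; neither Caccioppoli at the boundary nor Gehring supplies it.
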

	
	\begin{proof}
		Since $A$ is symmetric and positive definite, by a change of coordinate system, we may assume that $\mathcal{L} = \Delta$ (although we may consider the full operator and all the results hold true as well).
		
		The proof we will follow has been used for elliptic equations with Neumann boundary condition in \cite{kenig-shen}, just after the statement of Theorem 4.1.
		
		{\bf case(i) :} $B(x_0,2r)\subset \Omega$.\\
		The weak reverse H\"{o}lder inequality (\ref{rhi.}) holds for any $p\ge 2$, by the following well-known interior estimates for Harmonic functions, even when $\Omega$ is Lipschitz:
		\begin{equation*}
		\underset{B(x_0,r)}{\sup} \ |\nabla v|\le C \left( \ppvint\displaylimits_{\ \ \,B(x_0,2r)} {|\nabla v|^2}\right) ^{1/2}.
		\end{equation*}
		
		{\bf case(ii) :} $x_0\in\Gamma$.\\
		From the interior gradient estimate for harmonic function, we can write (eg. see \cite[Lemma 1.10]{HL})
		\begin{equation*}
		|\nabla v(x)|\le \frac{3}{\delta(x)} \ \underset{B(x,c\delta(x))}{\sup}|v|
		\end{equation*}
		for any $x\in B(x_0,r)\cap \Omega$ where $\delta(x) = d(x,\Gamma)$ and $c>0$ is chosen such that $B(x,2c\delta(x))\subsetneq B(x_0,2r)\cap\Omega$. From \cite[Remark 1.19]{HL}, we may then write
		\begin{equation*}
		|\nabla v(x)|\le \frac{C}{\delta(x)} \left( \pppvint\displaylimits_{ \ B(x, c\delta(x))}{|v|^2}\right) ^{1/2}.
		\end{equation*}
		Now for fixed $y\in B(x,2c\delta(x))$, let $u(x) = v(x) - v(y)$. Then $\mathcal{L}u = 0$ in $B(x,2c\delta(x))$ and thus we may write from the above argument,
		\begin{align*}
		|\nabla u(x)|\le \frac{C}{\delta(x)} \left( \pppvint\displaylimits_{ \ B(x,c\delta(x))}{|u|^2}\right) ^{1/2}
		\end{align*}
		which gives, along with the boundary H\"{o}lder estimate (\ref{bhe}),
		\begin{align}
		|\nabla v(x)| &\le \frac{C}{\delta(x)} \left( \pppvint\displaylimits_{ \ B(x,c\delta(x))}{|v(z) - v(y)|^2\mathrm{d}z}\right) ^{1/2}\nonumber\\
		& = \frac{C}{\delta(x)^{1+\frac{3}{2}}} \left( \ \int\displaylimits_{B(x,c\delta(x))}{|v(z) - v(y)|^2\mathrm{d}z}\right) ^{1/2}\nonumber\\
		& \le \frac{C}{\delta(x)^{1+\frac{3}{2}}} \left( \ \int\displaylimits_{B(x,2c\delta(x))}\left( \frac{|z-y|}{r}\right) ^{2\gamma}\left( \ \pppvint\displaylimits_{B(x_0,2r)\cap\Omega}{|v|^2}\right) \mathrm{d}z\right) ^{1/2}\nonumber\\
		& \le \frac{C}{\delta(x)^{1+\frac{3}{2}}} \left( \ \pppvint\displaylimits_{B(x_0,2r)\cap\Omega}{|v|^2}\right)^{1/2} \frac{1}{r^\gamma} \left( \ \int\displaylimits_{B(x,2c\delta(x))}{|z-y|^{2\gamma}\mathrm{d}z }\right)^{1/2}.\label{0}
		\end{align}
		Let us now calculate the last integral in the last inequality. Substituting $w = \frac{z-y}{4c\delta(x)}$, we get
		\begin{align*}
		\int\displaylimits_{B(x,2c\delta(x))}{|z-y|^{2\gamma}\mathrm{d}z } & \le C \int\displaylimits_{B(0,1)} {w^{2\gamma} (\delta(x))^{2\gamma +3} \mathrm{d}w}\\
		& = C \ (\delta(x))^{2\gamma +3} \int\displaylimits_{0}^{1}\int\displaylimits_0^{2\pi} {r^{2\gamma} r^2 \,\,\mathrm{d}r \,\,\mathrm{d}\theta}\\
		& = C \ (\delta(x))^{2\gamma +3} \ \frac{2\pi}{2\gamma +3} \le C_\gamma (\delta(x))^{2\gamma +3}.
		\end{align*}
		Plugging the value of the above integral in (\ref{0}), along with the Sobolev inequality, we then obtain
		\begin{align*}
		|\nabla v(x)| & \le \frac{C_\gamma}{(\delta(x))^{1+\frac{3}{2}}} \left( \ \pppvint\displaylimits_{B(x_0,2r)\cap\Omega}{|v|^2}\right)^{1/2} \frac{1}{r^\gamma} (\delta(x))^{\gamma + \frac{3}{2}}\\
		& = C_\gamma \ \frac{(\delta(x))^{\gamma -1}}{r^\gamma} r^{-3/2} \left( \ \int\displaylimits_{B(x_0,2r)\cap\Omega}{|v|^2}\right) ^{1/2}\\
		& \le C_\gamma \ \frac{(\delta(x))^{\gamma -1}}{r^\gamma} r^{1-3/2} \left( \ \int\displaylimits_{B(x_0,2r)\cap\Omega}{|v|^6}\right)^{1/6}\\
		& \le C_\gamma \left( \frac{r}{\delta(x)}\right) ^{1-\gamma}\left( \ \pppvint\displaylimits_{B(x_0,2r)\cap\Omega}{|\nabla v|^2+|v|^2}\right)^{1/2}.
		\end{align*} 
		Since $\gamma \in (0,1)$ is arbitrary, we thus have,
		\begin{equation*}
		|\nabla v(x)| \le C_\gamma \left( \frac{r}{\delta(x)}\right) ^{\gamma}\left( \ \pppvint\displaylimits_{B(x_0,2r)\cap\Omega}{|\nabla v|^2+|v|^2}\right)^{1/2}.
		\end{equation*}	
		Finally it yields choosing $\gamma$ so that $p\gamma <1$,
		\begin{equation*}
		\left(  \pppvint\displaylimits_{ \ B(x_0,r)\cap \Omega}{|\nabla v|^p}\right) ^{1/p} \le C_p \left( \ \pppvint\displaylimits_{B(x_0,2r)\cap\Omega}{|\nabla v|^2+|v|^2}\right)^{1/2}.
		\end{equation*}
		This completes the proof.
		\hfill
	\end{proof}
	
	A function $f$ in $BMO(\R^n)$ is said to be in $VMO(\R^n)$ if
	\begin{equation*}
	\underset{r\to 0}{\lim} \ \underset{x_0\in \R^n}{\sup} \ \frac{1}{r^n}\int\displaylimits_{B(x_0,r)} {|f-\overline{f} \ | \ \mathrm{d}x} =0
	\end{equation*}
	where $\overline{f} = \frac{1}{|B(x_0,r)|} \int\displaylimits_{B(x_0,r)}{f}$.\\
	
	\vspace{3pt}
	
	To treat the elliptic operator with $VMO$ coefficients, we prove the following approximation argument, found in \cite{CP}.
	
	\begin{lemma}
		\label{Lapprox}
		Let $\Omega$ be a $\mathcal{C}^1$ bounded domain in $\R^3$. Suppose that the coefficients of operator $\mathcal{L}$, defined in (\ref{10}), are symmetric and in $VMO(\R^3)$. Then there exists a function $h(r)$ and some constants $C>0, c>0$ with the following properties:\\
		\textit{i)} $\lim\limits_{r\to 0} h(r) =0$;\\
		\textit{ii)} for any $v\in H^{1}$ solution of
		\begin{equation}
		\label{1.}
		\begin{cases}
		\mathcal{L} v &= 0 \quad  \textup{ in } \ B(x,8r)\cap\Omega\\
		A\nabla v\cdot \vn  + \alpha v &= 0 \quad \textup{ on } \ B(x,8r)\cap\Gamma
		\end{cases}
		\end{equation}
		with $x\in \overline{\Omega}$ and $0<r<c r_0$, there exists a function $w \in W^{1,p}(B(x,r)\cap \Omega)$ such that for any $p>2$,
		\begin{equation}
		\label{4}
		\left( \pppvint\displaylimits_{ \ \  B(x,r)\cap \Omega}{|\nabla v - \nabla w|^2+|v-w|^2}\right) ^{1/2} \le h(r) \left( \ \pppvint\displaylimits_{\ B(x,8r)\cap \Omega}{|\nabla v|^2+|v|^2} \right) ^{1/2}
		\end{equation}
		
		\begin{equation}
		\label{3}
		\left( \pppvint\displaylimits_{ \ \ B(x,r)\cap \Omega}{|\nabla w|^p +|w|^p}\right) ^{1/p} \le C \left( \ \pppvint\displaylimits_{\,\,B(x,8r)\cap \Omega}{|\nabla v|^2 +|v|^2 } \right) ^{1/2},
		\end{equation}
		where the constant $C>0$ depends at most on $\Omega, p, \alpha_*, \mu$ and $A$.
	\end{lemma}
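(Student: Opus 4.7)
The plan is to build $w$ by freezing the coefficients of $A$ on $B(x,8r)\cap\Omega$ and imposing a Dirichlet condition on the interior cap $D := \partial B(x,2r)\cap\Omega$ alongside the Robin condition on $B(x,2r)\cap\Gamma$. Let $\overline{A}$ denote the matrix of entrywise averages of $A$ over $B(x,8r)\cap\Omega$, and set $\eta(r) := \dashint_{B(x,8r)\cap\Omega}|A-\overline{A}|$, which tends to $0$ as $r\to 0$ by the VMO hypothesis, uniformly in $x$. Define $w\in v+H^{1}_{0,D}(B(x,2r)\cap\Omega)$ as the unique weak solution of $\div(\overline{A}\nabla w)=0$ in $B(x,2r)\cap\Omega$ with $\overline{A}\nabla w\cdot\vn+\alpha w=0$ on $B(x,2r)\cap\Gamma$. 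Existence and uniqueness follow by Lax-Milgram, with constants uniform in $\alpha$ thanks to $\alpha\ge\alpha_*>0$ and Friedrichs' inequality on the mixed-boundary domain. The interior case $B(x,8r)\subset\Omega$ is handled identically, with a pure Dirichlet condition $w=v$ on $\partial B(x,2r)$ and Lemma \ref{Lrhi.}(i) replacing (ii); I focus below on the boundary case.

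For the closeness estimate (\ref{4}), I subtract the weak formulations of $v$ and $w$ and test with $\varphi=v-w\in H^{1}_{0,D}$. The Robin boundary integral $\int_{B(x,2r)\cap\Gamma}\alpha(v-w)^{2}$ is nonnegative and can be discarded, leaving, after ellipticity of $\overline{A}$ and Cauchy-Schwarz,
\begin{equation*}
\mu\,\|\nabla(v-w)\|_{L^{2}(B(x,2r)\cap\Omega)}^{2}\le \mu^{-1}\int_{B(x,2r)\cap\Omega}|A-\overline{A}|^{2}|\nabla v|^{2}.
\end{equation*}
H\"older with exponents $(2+\varepsilon)/\varepsilon$ and $(2+\varepsilon)/2$, plus boundedness of $A$ to absorb the high power into the first power of $|A-\overline{A}|$, yields after normalisation by $|B(x,2r)|\sim r^{3}$,
\begin{equation*}
\dashint_{B(x,2r)\cap\Omega}|\nabla(v-w)|^{2}\le C\,\eta(r)^{\varepsilon/(2+\varepsilon)}\left(\dashint_{B(x,2r)\cap\Omega}|\nabla v|^{2+\varepsilon}\right)^{2/(2+\varepsilon)}.
\end{equation*}
The weak reverse H\"older inequality (\ref{rhi1}) of Lemma \ref{Lrhi}, applied to $v$ between the scales $(2r,4r)$, upgrades the right-hand side to $\dashint_{B(x,4r)\cap\Omega}(|\nabla v|^{2}+|v|^{2})$, which is in turn bounded by $C\dashint_{B(x,8r)\cap\Omega}(|\nabla v|^{2}+|v|^{2})$. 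A scaled Poincar\'e inequality on $B(x,2r)\cap\Omega$, valid because $v-w$ vanishes on $D$ (a portion of $\partial(B(x,2r)\cap\Omega)$ of relative measure bounded below by the $\mathcal{C}^{1}$ regularity of $\Omega$), delivers $\|v-w\|_{L^{2}}\le Cr\|\nabla(v-w)\|_{L^{2}}$. Together these produce (\ref{4}) with $h(r) = C\,\eta(r)^{\varepsilon/(2(2+\varepsilon))}\to 0$.

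For (\ref{3}), since $\overline{A}$ is constant, Lemma \ref{Lrhi.}(ii) applied to $w$ on the pair $(B(x,r), B(x,2r))$ gives
\begin{equation*}
\left(\dashint_{B(x,r)\cap\Omega}(|\nabla w|^{p}+|w|^{p})\right)^{1/p}\le C\left(\dashint_{B(x,2r)\cap\Omega}(|\nabla w|^{2}+|w|^{2})\right)^{1/2}.
\end{equation*}
Splitting $w=v+(w-v)$, bounding the $v$-part by the trivial inclusion $B(x,2r)\subset B(x,8r)$ and the $(w-v)$-part by the closeness estimate just obtained (with $\eta(r)$ treated as a bounded quantity), one obtains (\ref{3}).

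The main technical obstacle is the simultaneous uniformity in $\alpha$ and the quantitative VMO decay. Crucially, every energy identity sees the Robin term $\int_{\Gamma}\alpha(v-w)^{2}$ only as a sign-definite absorbing quantity and never as a multiplicative factor on the data side, so every constant stays independent of $\alpha$ and depends on it only through the lower bound $\alpha_{*}$. Simultaneously, the higher integrability of $\nabla v$ afforded by Lemma \ref{Lrhi} is precisely what converts the bounded (but otherwise uncontrolled) oscillation $|A-\overline{A}|$ into a strictly positive power of the VMO modulus $\eta(r)$, yielding the required $h(r)\to 0$.
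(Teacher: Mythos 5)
Your proof is correct, and it takes a genuinely different route at the central step: the construction of the approximating function $w$. The paper defines $w$ on $B(x_0,4r)\cap\Omega$ as the solution of a \emph{pure Robin} problem $\div(B\nabla w)=0$ with $(B\nabla w)\cdot\vn+\alpha w=(A\nabla v)\cdot\vn+\alpha v$ on all of $\partial(B(x_0,4r)\cap\Omega)$, so that $v$ and $w$ share Robin data; the closeness estimate for $v-w$ then passes through a Friedrichs-type norm equivalence whose constant depends on $\alpha_*$. You instead prescribe the \emph{Dirichlet} datum $w=v$ on the interior cap $D=\partial B(x,2r)\cap\Omega$, so that $v-w\in H^1_{0,D}$; this makes the Robin boundary integral sign-definite and discardable, and the $L^2$ control on $v-w$ falls out of a scaled Poincar\'e inequality on $H^1_{0,D}$ rather than a Friedrichs inequality. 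Both versions then use the self-improving $L^{2+\varepsilon}$ wRHI of Lemma \ref{Lrhi} for $v$ to absorb one H\"older exponent and convert the oscillation $|A-\overline{A}|$ into a positive power of the VMO modulus (your $h(r)=C\,\eta(r)^{\varepsilon/(2(2+\varepsilon))}$ is the explicit form of what the paper obtains via a $2q$--$2q'$ H\"older split plus John--Nirenberg), and both invoke Lemma \ref{Lrhi.}(ii) for the constant-coefficient $w$ to reach (\ref{3}). Your mixed Dirichlet--Robin variant is arguably cleaner: coercivity on $H^1_{0,D}$ is automatic, the Lax--Milgram and closeness steps genuinely never see $\alpha_*$, and you sidestep the paper's slight imprecision of writing an $\alpha$-weighted integral on the interior cap where $\alpha$ is not a priori defined. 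The price is that you must know the relative surface measure of $D$ in $\partial(B(x,2r)\cap\Omega)$ is bounded below uniformly in $x\in\Gamma$ and $r<cr_0$, which is a standard consequence of the $\mathcal{C}^1$ geometry and worth recording explicitly for the scaled Poincar\'e constant to be $r$-uniform.
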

	
	\begin{proof}
		Let us fix $x_0\in \overline{\Omega}$ and $0<r < c r_0$ where $0<c<<1$ is such that Lemma \ref{Lrhi.} can be applied suitably. Let $v\in H^1(B(x_0, 8r)\cap\Omega)$ be a weak solution of (\ref{1.}). Consider
		\begin{equation}
		\label{2}
		\begin{cases}
		\div (B\nabla w)= 0 \quad &\textup{ in } \ B(x_0,4r)\cap\Omega\\
		(B\nabla w)\cdot \vn + \alpha w = (A\nabla v)\cdot \vn + \alpha v  \quad &\textup{ on } \ \partial (B(x_0,4r)\cap\Omega)
		\end{cases}
		\end{equation}
		where $B=(b_{ij})_{1\le i,j\le 3}$ are the constants given by
		\begin{equation*}
		b_{ij} = \frac{1}{|B(x_0,8r)|}\int\displaylimits_{B(x_0,8r)} {a_{ij}(x) \ \mathrm{d}x} .
		\end{equation*}
		So, $w\in H^1(B(x_0,4r)\cap \Omega)$ is a weak solution of (\ref{2}) if for all $\varphi \in H^1(B(x_0,4r)\cap\Omega)$,
		\begin{equation*}
		\int\displaylimits_{B(x_0,4r)\cap \Omega}{B\nabla w\cdot \nabla \varphi} +\int\displaylimits_{\partial (B(x_0,4r)\cap\Omega)}{\alpha w \ \varphi}=  \int\displaylimits_{B(x_0,4r)\cap \Omega}{A\nabla v\cdot \nabla \varphi} +\int\displaylimits_{\partial (B(x_0,4r)\cap\Omega)}{\alpha v \ \varphi}.
		\end{equation*}
		The existence of $w\in H^1(B(x_0,4r)\cap \Omega)$ follows immediately from the regularity of $v$. It then follows
		\begin{equation*}
		\int\displaylimits_{B(x_0,4r)\cap \Omega}{B\nabla (v-w)\cdot \nabla \varphi} + \int\displaylimits_{\partial (B(x_0,4r)\cap\Omega)} {\alpha (v-w)\varphi}= \int\displaylimits_{B(x_0,4r)\cap \Omega}{(B-A)\nabla v\cdot \nabla \varphi}.
		\end{equation*}
		Next we show that $w$ satisfies estimates (\ref{4}) and (\ref{3}).
		
		To see (\ref{4}), choosing $\varphi = v-w$, by ellipticity and Cauchy inequality, we obtain
		\begin{equation*}
		\begin{aligned}
		& \mu \int\displaylimits_{B(x_0,4r)\cap \Omega} {|\nabla (v-w)|^2}+ 
		\int\displaylimits_{\partial (B(x_0,4r)\cap\Omega)} {\alpha |v-w|^2}\\
		\le & \int\displaylimits_{B(x_0,4r)\cap \Omega} {|(B-A)\nabla v| |\nabla (v-w)|}\\
		\le & \ C_\varepsilon \int\displaylimits_{B(x_0,4r)\cap \Omega}{|(B-A)\nabla v|^2}  +\varepsilon \int\displaylimits_{B(x_0,4r)\cap \Omega}{|\nabla (v-w)|^2}.
		\end{aligned}
		\end{equation*}
		But we also have the equivalence of norm,
		\begin{equation*}
		\begin{aligned}
		\|v - w\|^2_{H^1(B(x_0,4r)\cap \Omega)} & \le C \left( \ \int\displaylimits_{B(x_0,4r)\cap \Omega}{|\nabla (v-w)|^2 } +\int \displaylimits_{B(x_0,4r)\cap \Gamma}{ |v-w|^2}\right) \\
		& \le C(\alpha_*) \left( \ \int\displaylimits_{B(x_0,4r)\cap \Omega}{|\nabla (v-w)|^2 } +\int \displaylimits_{B(x_0,4r)\cap \Gamma}{\alpha |v-w|^2}\right)
		\end{aligned}
		\end{equation*}
		where the above constant $C>0$ depends on $\Omega$ and $\alpha_*$ but is independent of $r$ and $\alpha$. 
		This gives
		\begin{align*}
		& \quad \left( \ \pppvint\displaylimits_{B(x_0,4r)\cap \Omega}{|\nabla v - \nabla w|^2 + |v-w|^2 }\right) ^{1/2}\\
		& \le C \left( \ \pppvint\displaylimits_{B(x_0,4r)\cap \Omega}{|(B-A)\nabla v|^2}\right) ^{1/2} \\
		& \le C \left( \ \pppvint\displaylimits_{B(x_0,4r)\cap \Omega}{ |\nabla v|^{2q}}\right) ^{1/2q} \left( \ \pppvint\displaylimits_{B(x_0,4r)\cap \Omega}{|B-A|^{2q' }}\right) ^{1/2q'}.
		\end{align*}
		Defining
		\begin{equation*}
		h(r) = C \underset{x_0\in \overline{\Omega}}{\sup}\ \left( \ \pppvint\displaylimits_{B(x_0,4r)\cap \Omega}{|B-A|^{2q' }}\right) ^{1/2q'}
		\end{equation*}
		the last inequality yields
		\begin{equation*}
		\begin{aligned}
		\left( \ \pppvint\displaylimits_{B(x_0,4r)\cap \Omega}{|\nabla v - \nabla w|^2 +|v-w|^2}\right) ^{1/2}
		& \le h(r) \left( \ \pppvint\displaylimits_{B(x_0,4r)\cap \Omega}{ |\nabla v|^{2q}}\right) ^{1/2q}\\
		& \le h(r) \left( \ \pppvint\displaylimits_{B(x_0,8r)\cap \Omega}{ |\nabla v|^{2}+|v|^2}\right) ^{1/2}.
		\end{aligned}
		\end{equation*}
		Note that in the last line, we used $L^{2+\varepsilon}$ weak reverse H\"{o}lder inequality (\textit{i.e.} for some $q>1$) for $v$ which follows from Lemma \ref{Lrhi}. It is known from John-Nirenberg inequality that $h(r) \to 0$ as $r\to 0$. Indeed, John-Nirenberg inequality says, for any $BMO$-function $f$,
		\begin{equation*}
		\int\displaylimits_{B}{e^{M|f-\overline{f}|}}\le C r^n
		\end{equation*}
		for some constant $C>0$ depending only on $n$. Since $A\in VMO(\R^3)$, by definition we get that $h(r)\to 0$. 
		
		Finally, to see (\ref{3}), note that $(B\nabla w)\cdot \vn + \alpha w =0$ on $B(x_0,4r)\cap\Gamma$. Thus, by Lemma \ref{Lrhi.}, we obtain, for any $p\ge 2$,
		\begin{equation*}
		\begin{aligned}
		& \quad \left( \ \ppvint\displaylimits_{B(x_0,r)\cap\Omega} {|\nabla w|^p} \right) ^{1/p}\\
		&\le C \left( \ \pppvint\displaylimits_{B(x_0,4r)\cap\Omega} {|\nabla w|^2+|w|^2} \right) ^{1/2}\\
		& \le C \left( \ \pppvint\displaylimits_{B(x_0,4r)\cap\Omega} {|\nabla v|^2+|v|^2} \right) ^{1/2} + C \left( \ \pppvint\displaylimits_{B(x_0,4r)\cap\Omega} {|\nabla (v-w)|^2+|v-w|^2} \right) ^{1/2}\\
		& \le C \left( \ \pppvint\displaylimits_{B(x_0,8r)\cap\Omega} {|\nabla v|^2 +|v|^2} \right) ^{1/2}.
		\end{aligned}
		\end{equation*}
		This shows that in fact $w\in W^{1.p}(B(x_0,r)\cap\Omega)$ which completes the proof.
		\hfill
	\end{proof}
	
	With Lemma \ref{Lapprox} at our hand, we may use the following approximation theorem, motivated from the paper of Caffarelli and Peral \cite{CP} and proved in \cite{geng}, to finish the proof of the weak reverse H\"{o}lder inequality for $VMO$ coefficient.
	
	\begin{theorem}
		\label{8}
		Let $E\subset \R^n$ be any open set and $F:E\to \R^n$ locally square integrable. Let $p>2$. Suppose there exists some constants $\beta>1$, $C>1$ and $\varepsilon>0$ such that for every cube $Q$ with $2Q = Q(x_0,2r)\subset E$, there exists a measurable function $R_Q$ on $2Q$ satisfying
		\begin{equation}
		\label{5}
		\left( \frac{1}{|Q|}\int\displaylimits_Q {|R_Q|^p} \right) ^{1/p} \le C \left( \frac{1}{| \beta Q|} \int\displaylimits_{\beta Q}{|F|^2} \right) ^{1/2}
		\end{equation}
		and
		\begin{equation}
		\label{6}
		\left( \frac{1}{|Q|}\int\displaylimits_Q {|F-R_Q|^2} \right) ^{1/2} \le \varepsilon \left( \frac{1}{|\beta Q|}\int\displaylimits_{\beta Q} {|F|^2} \right) ^{1/2}.
		\end{equation}
		Let $2<q<p$. Then, there exists $\varepsilon_0 = \varepsilon_0(C,n,p,q,\beta)$ such that if $\varepsilon<\varepsilon_0 $, we have
		\begin{equation}
		\label{7}
		\left( \frac{1}{|Q|} \int\displaylimits_{Q} {|F|^q} \right) ^{1/q} \le C_1 \left( \frac{1}{|2Q|} \int\displaylimits_{2Q} {|F|^2} \right) ^{1/2}
		\end{equation}
		where $C_1>0$ depends only on $C, n, p, q, \beta$.
	\end{theorem}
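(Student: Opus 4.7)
The plan is to prove the theorem by a standard good-$\lambda$ and level-set iteration in the spirit of Caffarelli-Peral. Fix a reference cube $Q_0$ with $2Q_0 \subset E$, and after rescaling normalize so that $\frac{1}{|2Q_0|}\int_{2Q_0}|F|^2 = 1$. Introduce the localized Hardy-Littlewood maximal operator $\mathcal{M}$ applied to $|F|^2 \chi_{2Q_0}$ and study the distribution function $\phi(\lambda) := |\{x \in Q_0 : \mathcal{M}(|F|^2)(x) > \lambda\}|$. Via the layer-cake representation, the conclusion \eqref{7} will follow once one establishes enough decay of $\phi$ to make the integral $\int_0^\infty \lambda^{q/2-1}\phi(\lambda)\,\mathrm{d}\lambda$ converge for every $q < p$.

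The heart of the argument is a good-$\lambda$ inequality: there exist $N > 1$ (to be chosen large) and $C_0 = C_0(C,n,\beta)$ such that, whenever $\lambda$ exceeds an explicit threshold $\lambda_0$,
\begin{equation*}
\phi(N^2\lambda) \le C_0\bigl(\varepsilon^2 + N^{-p}\bigr)\,\phi(\lambda).
\end{equation*}
To prove it, apply a Calder\'on-Zygmund decomposition of the open set $\{\mathcal{M}(|F|^2) > \lambda\}$ into a family of pairwise disjoint dyadic cubes $\{Q_k\}$ whose parents $\widetilde{Q}_k$ satisfy $\dashint_{\widetilde{Q}_k}|F|^2 \le \lambda$. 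On each $Q_k$, invoke the hypothesis to obtain an approximant $R_{Q_k}$ on $\beta Q_k$, and use the pointwise bound $|F|^2 \le 2|F - R_{Q_k}|^2 + 2|R_{Q_k}|^2$ to split the subset of $Q_k$ where $|F|^2 > N^2\lambda$ into two pieces. Chebyshev combined with \eqref{6} bounds the $|F - R_{Q_k}|$-part by $C\varepsilon^2|Q_k|/N^2$, while Chebyshev combined with \eqref{5} bounds the $|R_{Q_k}|$-part by $C N^{-p}|Q_k|$. Summing over $k$ and passing from $|F|^2$ back to $\mathcal{M}(|F|^2)$ through the weak-type $(1,1)$ inequality for $\mathcal{M}$ yields the claim.

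With the good-$\lambda$ inequality in hand, first choose $N$ large enough that $C_0 N^{-p} < 1/4$, then choose $\varepsilon_0$ small enough that $C_0\varepsilon_0^2 < 1/4$. Iterating gives $\phi(N^{2m}\lambda_0) \le 2^{-m}\phi(\lambda_0)$, so that
\[
\int_{Q_0} \mathcal{M}(|F|^2)^{q/2}\,\mathrm{d}x = \tfrac{q}{2}\int_0^\infty \lambda^{q/2-1}\phi(\lambda)\,\mathrm{d}\lambda
\]
converges for every $q < p$, with norm controlled by $\bigl(\frac{1}{|2Q_0|}\int_{2Q_0}|F|^2\bigr)^{q/2}|Q_0|$. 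Since $|F|^2 \le \mathcal{M}(|F|^2)$ almost everywhere in $Q_0$, the estimate \eqref{7} follows after undoing the normalization.

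The most delicate step will be the good-$\lambda$ inequality: one has to verify that the dilated cube $\beta Q_k$ stays inside $E$ (which is why one works with $2Q_0 \subset E$ and, if needed, shrinks the starting cube slightly), correctly absorb the passage $|F|^2 \to \mathcal{M}(|F|^2)$ on both sides, and make sure the two Chebyshev contributions combine into a single coefficient $C_0(\varepsilon^2 + N^{-p})$ that can be driven below $1/2$. The low-$\lambda$ portion of the layer-cake integral is harmless provided $\lambda_0$ is taken as a sufficiently large multiple of $\frac{1}{|2Q_0|}\int_{2Q_0}|F|^2$.
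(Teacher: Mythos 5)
The paper does not actually prove this theorem; it states it and refers to \cite{geng} (and to \cite{CP}) for the proof, so there is no internal argument to compare against. Your sketch follows the standard Calder\'on--Zygmund/good-$\lambda$ strategy used in those references, which is the right framework. Nevertheless there is a genuine quantitative error in the final iteration step that makes the conclusion fail as written.

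The problem is in the sentence ``first choose $N$ large enough that $C_0N^{-p}<1/4$, then choose $\varepsilon_0$ small enough that $C_0\varepsilon_0^2<1/4$. Iterating gives $\phi(N^{2m}\lambda_0)\le 2^{-m}\phi(\lambda_0)$, so that $\int_0^\infty\lambda^{q/2-1}\phi(\lambda)\,\mathrm d\lambda$ converges for every $q<p$.'' Discretizing the layer-cake integral over blocks $[N^{2m}\lambda_0,N^{2(m+1)}\lambda_0]$ gives a series of the form
\begin{equation*}
\sum_{m\ge 0} N^{q(m+1)}\lambda_0^{q/2}\,\phi\bigl(N^{2m}\lambda_0\bigr)\ \lesssim\ \lambda_0^{q/2}|Q_0|\sum_{m\ge 0}\bigl(N^{q}\delta\bigr)^{m},
\end{equation*}
where $\delta$ is the coefficient in the good-$\lambda$ inequality. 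With your choices $\delta\le 1/2$, so $N^q\delta\ge N^q/2>1$ whenever $N>2^{1/q}$ (recall $q>2$, so $2^{1/q}<\sqrt2$), and since you must take $N$ large to make $C_0N^{-p}$ small, the series diverges. The decay $2^{-m}$ is simply not fast enough to beat the geometric growth $N^{qm}$ coming from the weight $\lambda^{q/2-1}$.

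The correct bookkeeping is that one needs $N^q\,\delta<1$, where $\delta\le C_0(\varepsilon^2 N^{-2}+N^{-p})$. Since $q<p$, one can first choose $N$ large so that $C_0\,N^{q-p}<\tfrac12$, and only then choose $\varepsilon_0$ so small that $C_0\,N^{q-2}\varepsilon_0^2<\tfrac12$. In particular $\varepsilon_0$ necessarily depends on $q,p,\beta,C,n$, as the theorem statement already indicates; with your parameter choices that dependence disappears, which is a sign something is off. This interaction between $N$, $q$ and $p$ is also exactly the reason the conclusion is only available for $q$ strictly less than $p$, a fact your version of the iteration does not see. The rest of the sketch (the CZ stopping time, the transfer from $|F|^2$ to $\mathcal M(|F|^2)$ via the weak-$(1,1)$ and strong-$(p/2)$ bounds for $\mathcal M$, and the care about $\beta Q_k\subset E$) is reasonable at the level of a plan, though those steps too need to be written out since they carry their own dimensional constants into $C_0$.
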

	
	\begin{theorem}
		\label{T3}
		Let $\Omega$ be a $\mathcal{C}^1$ bounded domain in $\R^3$ and $p\ge 2$. Suppose that the coefficients of operator $\mathcal{L}$, defined in (\ref{10}), are symmetric and in $VMO(\R^3)$. Then for any $B(x,r)$ with the property that $0<r<\frac{r_0}{8}$ and either $B(x,2r)\subset \Omega$ or $ x\in\Gamma$, the weak Reverse H\"{o}lder inequalities (\ref{rhi.}) and (\ref{rhi1.}) hold with constant $C>0$ independent of $\alpha$.
	\end{theorem}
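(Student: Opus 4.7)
The proof follows the Caffarelli--Peral scheme used in \cite{geng} for the Neumann problem: we plug into Theorem \ref{8} the function $F$ built from the solution $v$, and, on each small cube $Q$, the approximant $R_Q$ built from the constant-coefficient solution $w$ produced by Lemma \ref{Lapprox}. The two inputs required by Theorem \ref{8}, namely (\ref{5}) and (\ref{6}), are delivered respectively by (\ref{3}) (together with the constant-coefficient wRHI of Lemma \ref{Lrhi.} applied to $w$) and by (\ref{4}).

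\textbf{Interior case.} Fix $B(x_0,2r)\subset\Omega$ and set $F:=|\nabla v|\,\chi_{B(x_0,2r)}$. For every cube $Q=Q(y,s)$ with $8Q\subset B(x_0,2r)$, apply the interior part of Lemma \ref{Lapprox} on $B(y,s)$ to obtain $w$, and set $R_Q:=|\nabla w|\,\chi_Q$. Hypothesis (\ref{5}) of Theorem \ref{8} (with $\beta=8$) follows from the wRHI (\ref{rhi.}) applied to the frozen-coefficient solution $w$, together with (\ref{3}) of Lemma \ref{Lapprox}; hypothesis (\ref{6}) is exactly (\ref{4}) with constant $h(s)$. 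Since $h$ is monotone non-increasing as $s\downarrow 0$, taking $r$ small enough that $h(r)<\varepsilon_0$ forces $h(s)<\varepsilon_0$ for all sub-cubes, and Theorem \ref{8} delivers (\ref{rhi.}) for any $q\in(2,p)$; iterating in $p$ covers the full range $p\ge 2$.

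\textbf{Boundary case.} For $x_0\in\Gamma$, set $F:=(|v|^2+|\nabla v|^2)^{1/2}\chi_\Omega$, and, on each cube $Q=Q(y,s)$ with $8Q\subset B(x_0,2r)$, apply the appropriate (interior or boundary) branch of Lemma \ref{Lapprox} to produce $w$; set $R_Q:=(|w|^2+|\nabla w|^2)^{1/2}\chi_\Omega$ if $Q$ meets $\Gamma$ and $R_Q:=|\nabla w|\chi_\Omega$ otherwise. Again, (\ref{3}) and the constant-coefficient wRHI (\ref{rhi1.}) give (\ref{5}), while (\ref{4}) gives (\ref{6}); Theorem \ref{8} then produces (\ref{rhi1.}).

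\textbf{Main obstacle and $\alpha$-uniformity.} The only real subtlety is ensuring that the VMO modulus $h(s)$ stays below the threshold $\varepsilon_0$ furnished by Theorem \ref{8} for \emph{every} sub-cube at every relevant scale. This is the sole place the VMO hypothesis is used: fix $r_1>0$ with $h(r_1)<\varepsilon_0$; for $r\le r_1$ the argument above applies directly, while for $r_1\le r\le r_0/8$ we cover $B(x_0,r)\cap\Omega$ by finitely many balls of radius $r_1$ and assemble the local wRHIs, the finite overlap constant depending only on $\Omega$. Independence of the constant on $\alpha$ is inherited without effort, since Lemma \ref{Lrhi.} is $\alpha$-free and Lemma \ref{Lapprox} depends on $\alpha$ only through the lower bound $\alpha_*$; no other step introduces an $\alpha$-dependence.
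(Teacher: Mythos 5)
Your argument follows the same route as the paper: build $F$ from the solution $v$ of the Robin/interior problem, build $R_Q$ from the frozen-coefficient approximant $w$ of Lemma \ref{Lapprox}, verify hypotheses (\ref{5}) and (\ref{6}) of Theorem \ref{8} from (\ref{3}) and (\ref{4}), and let Theorem \ref{8} yield the wRHI. Two small points are worth noting: (1) you do not explicitly handle the sub-case of a cube $Q$ meeting $\Omega$ but centred strictly outside $\overline\Omega$, where Lemma \ref{Lapprox} does not directly apply --- the paper resolves this with a geometric observation, replacing $Q$ by a nearby boundary-centred ball $\tilde{B}$ with $Q\subset\tilde{B}\subset E$; and (2) the ``iterating in $p$'' step is unnecessary: since Lemma \ref{Lapprox} gives $w\in W^{1,p'}$ for \emph{every} $p'>2$, you may simply invoke Theorem \ref{8} with $q=p$ and any $p'>p$ in the role of ``$p$'', and likewise the covering argument over $r_1\le r\le r_0/8$ is avoided in the paper by shrinking $r_0$ at the outset so that $\sup_{0<r<r_0}h(r)<\varepsilon_0$.
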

	
	\begin{proof}
		Let $h(r)$ be same as in Lemma \ref{Lapprox} and choose $q$ such that $2<q<p$. Let $\varepsilon_0$ be the same as in Theorem \ref{8} and then we choose $r_0$ small enough such that $\underset{0<r<r_0}{\sup} h(r) < \varepsilon_0$.
		
		Let $v\in H^1(B(x_0,8r)\cap\Omega)$ be a weak solution of
		\begin{equation*}
		\begin{cases}
		\mathcal{L} v &= 0 \quad  \textup{ in } \ B(x_0,8r)\cap\Omega\\
		A\nabla v\cdot \vn + \alpha v &= 0 \quad \textup{ on } \ B(x_0,8r)\cap\Gamma
		\end{cases}
		\end{equation*}
		where $0<r<\frac{r_0}{8}$ and either $B(x_0,2r)\subset \Omega$ or $x_0\in \Gamma$. To apply Theorem \ref{8}, take $E = B(x_0,8r)$ and $B(x,r)$ is any ball with $B(x,2r)\subset E$. Then the proof divides in the following cases:
		
		\noindent i) if $B(x,r)\cap\Omega = \emptyset$, we take $F=0=R_B$,
		
		\noindent ii) if $B(x,r)\subset \Omega$, set $F=\nabla v$ and $R_B = \nabla u$,
		
		\noindent iii) if $B(x,r)\cap\Omega \ne \emptyset$ and $B(x,r)\cap (\overline{\Omega})^c \ne \emptyset$, we further consider the two situations: 
		
		 -- if $x\in \overline{\Omega}$, set
		\begin{equation*}
		F=(\nabla v, v) \chi_\Omega \quad \text{ and } \quad
		R_{{B}} =
		\begin{cases}
		(\nabla w, w) & \text{ on } B(x,r)\cap\Omega,\\
		0 &\text{ on } B(x,r)\cap\Omega^c;
		\end{cases}
		\end{equation*}
		
		-- if $x\notin \overline{\Omega}$, by a geometric observation, it is easy to find a ball $\tilde{B} = B(y,2r)$ such that $ y\in \Gamma$ and $B\subset \tilde{B}\subset E$, we then set
		\begin{equation*}
		F = (\nabla v, v) \chi_\Omega \quad \text{ and } \quad
		R_{\tilde{B}} =
		\begin{cases}
		(\nabla w, w) & \text{ on } B(y,2r)\cap\Omega,\\
		0 &\text{ on } B(y,2r)\cap\Omega^c.
		\end{cases}
		\end{equation*}
		The estimates (\ref{5}) and (\ref{6}) now follow from (\ref{3}) and (\ref{4}). This finishes the proof.
		
		\hfill
	\end{proof}

Now to complete the proof of Theorem \ref{T1}, we also need the following lemma which is proved in \cite[Theorem 2.2]{geng}.

\begin{lemma}
\label{G0}
Let $\Omega$ be a bounded Lipschitz domain in $\R^3$ and $p>2$. Let $G\in \L{2}$ and $f\in\L{q}$ for some $2<q<p$. Suppose that for each ball $B$ with the property that $|B|\le \beta |\Omega|$ and either $2B \subset \Omega$ or $B$ centers on $\Gamma$, there exist two integrable functions $G_B$ and $R_B$ on $2B\cap \Omega$ such that $|G|\le |G_B| + |R_B|$ on $2B\cap \Omega$ and
\begin{equation}
\label{G1}
\begin{aligned}
& \left(\frac{1}{|2B\cap \Omega|}\int\displaylimits_{2B\cap \Omega}{|R_B|^p} \right) ^{1/p}\\
& \le C_1 \left[ \left(\frac{1}{|\gamma B \cap \Omega|}\int\displaylimits_{\gamma B \cap \Omega}{|G|^2}\right)^{1/2} + \underset{B \subset B'}{sup} \left(\frac{1}{|B'\cap \Omega|}\int\displaylimits_{B'\cap \Omega}{|f|^2}\right)^{1/2}\right] 
\end{aligned}
\end{equation}
and
\begin{equation}
\label{G2}
\left(\frac{1}{|2B\cap \Omega|}\int\displaylimits_{2B\cap \Omega}{|G_B|^2} \right) ^{1/2} \le C_2 \ \underset{B \subset B'}{sup} \left(\frac{1}{|B'\cap \Omega|}\int\displaylimits_{B'\cap \Omega}{|f|^2}\right)^{1/2}
\end{equation}
where $C_1, C_2 >0$ and $0<\beta <1<\gamma$. Then we have,
\begin{equation}
\label{G3}
\left(\frac{1}{|\Omega|}\int\displaylimits_{ \Omega}{|G|^q} \right) ^{1/q} \le C \left[ \left(\frac{1}{|\Omega|}\int\displaylimits_{\Omega}{|G|^2}\right)^{1/2} + \left(\frac{1}{|\Omega|}\int\displaylimits_{\Omega}{|f|^q}\right)^{1/q}\right]
\end{equation}
where $C>0$ depends only on $C_1, C_2, n, p, q, \beta, \gamma$ and $\Omega$. 
\end{lemma}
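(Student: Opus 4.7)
The plan is to prove Lemma \ref{G0} via a good-$\lambda$ / real-variable argument in the spirit of Caffarelli--Peral and Shen, using the local decomposition $|G|\le |G_B|+|R_B|$ to upgrade the hypothesized $L^p$ bound on $R_B$ into a global $L^q$ bound on $G$ for any $2<q<p$. The main actor will be a Hardy--Littlewood maximal operator $\mathcal{M}$ taken relative to $\Omega$, namely $\mathcal{M}(h)(x)=\sup_{B'\ni x}\tfrac{1}{|B'\cap\Omega|}\int_{B'\cap\Omega}|h|$, so that hypothesis (\ref{G2}) reads $(\dashint_{2B\cap\Omega}|G_B|^2)^{1/2}\le C_2\,\mathcal{M}(|f|^2)(x)^{1/2}$ for any $x\in B$, and the right-hand sup in (\ref{G1}) is controlled by $\mathcal{M}(|f|^2)$ as well.

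First I would normalize so that $\bigl(\tfrac{1}{|\Omega|}\int_\Omega|G|^2\bigr)^{1/2}+\bigl(\tfrac{1}{|\Omega|}\int_\Omega|f|^q\bigr)^{1/q}=1$, and fix a large threshold $\lambda_0$ comparable to $1$. For $\lambda\ge \lambda_0$ I would apply a Vitali/Calder\'on--Zygmund stopping-time decomposition of the level set $E_\lambda:=\{x\in\Omega:\mathcal{M}(|G|^2)(x)>\lambda\}$: cover it by maximal dyadic balls $B_j$ satisfying $\dashint_{2B_j\cap\Omega}|G|^2>\lambda$ but with the predecessor under threshold, and moreover $|B_j|\le \beta|\Omega|$ thanks to the normalization. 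Each such $B_j$ either has $2B_j\subset\Omega$ or can be replaced by a comparable ball centered on $\Gamma$ (using the boundary dichotomy in the hypotheses), which is where the $\mathcal{C}^1$/Lipschitz geometry enters.

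On each selected ball the hypotheses give $|G|\le|G_{B_j}|+|R_{B_j}|$ with $R_{B_j}\in L^p$ and $G_{B_j}$ controlled in $L^2$ by $\mathcal{M}(|f|^2)$. Using Chebyshev on the $L^p$ bound (\ref{G1}) and the $L^2$ bound (\ref{G2}), I obtain, for suitably chosen absolute constants $A>1$ and $\varepsilon>0$, the good-$\lambda$ inequality
\begin{equation*}
\bigl|\{x\in\Omega:\mathcal{M}(|G|^2)(x)>A\lambda,\ \mathcal{M}(|f|^2)(x)\le \varepsilon\lambda\}\bigr|\ \le\ C\bigl(A^{-p/2}+\varepsilon\bigr)\,|E_\lambda|.
\end{equation*}
Choosing $A$ large so that $C A^{-p/2}\le \tfrac{1}{4}$ and $\varepsilon$ small so that $C\varepsilon\le \tfrac{1}{4}$ makes the prefactor strictly less than $1$; this is the standard engine that trades a bit of integrability gain for a geometric factor.

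Finally I would integrate against $q/2\cdot\lambda^{q/2-1}\mathrm{d}\lambda$ over $(\lambda_0,\infty)$, i.e.\ use the layer-cake formula for $\int\mathcal{M}(|G|^2)^{q/2}$. The good-$\lambda$ inequality, combined with the weak-type $(1,1)$ bound for $\mathcal{M}$ applied to $|f|^2\in L^{q/2}$, absorbs the dominant term into the left-hand side and leaves
\begin{equation*}
\int_\Omega \mathcal{M}(|G|^2)^{q/2}\ \le\ C\!\left(\int_\Omega|G|^2+\int_\Omega\mathcal{M}(|f|^2)^{q/2}\right)\ \le\ C\!\left(\int_\Omega|G|^2+\int_\Omega|f|^q\right),
\end{equation*}
and $|G|\le \mathcal{M}(|G|^2)^{1/2}$ a.e.\ then yields (\ref{G3}) after undoing the normalization. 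The main technical obstacle is the geometric bookkeeping in the Calder\'on--Zygmund step near $\Gamma$: one must consistently replace interior selected balls by boundary-centered balls of comparable size and ensure the constant $\gamma$ in the enlargement (\ref{G1}) is absorbed without blowing up, which is where the Lipschitz character of $\Omega$ (uniform cone/plumpness conditions) is used.
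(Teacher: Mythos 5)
The paper does not prove Lemma \ref{G0} itself; it cites \cite[Theorem 2.2]{geng} and moves on. Your sketch reproduces exactly the engine behind that reference (and behind the Caffarelli--Peral and Shen templates it follows): a stopping-time/Calder\'on--Zygmund decomposition of the level sets of the localized maximal function $\mathcal{M}(|G|^2)$, the pointwise splitting $|G|\le|G_B|+|R_B|$ on selected balls to produce a good-$\lambda$ inequality via Chebyshev from (\ref{G1}) and (\ref{G2}), and a layer-cake integration with absorption. So this is the same route as the cited proof, not a different one.

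Two points in the write-up are imprecise, though neither is a conceptual gap. First, the absorption condition ``$CA^{-p/2}\le\tfrac14$ and $C\varepsilon\le\tfrac14$, so the prefactor is less than $1$'' is not what is needed: after the change of variables $\lambda\mapsto A\lambda$ in $\int_0^\infty\lambda^{q/2-1}|E_\lambda|\,d\lambda$ you gain a factor $A^{q/2}$, so the true requirement is $A^{q/2}\,C\bigl(A^{-p/2}+\varepsilon\bigr)<1$, i.e.\ $CA^{-(p-q)/2}+CA^{q/2}\varepsilon<1$. This is exactly where the strict inequality $q<p$ enters --- first take $A$ large to beat the negative power, then shrink $\varepsilon$ --- and your stated condition as written would not close the absorption. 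Second, the Chebyshev bounds you extract from (\ref{G1}) and (\ref{G2}) control $|\{\,|G|^2>A\lambda\}\cap 2B|$, but the good-$\lambda$ inequality is stated for $|\{\mathcal{M}(|G|^2)>A\lambda\}\cap B|$; passing from one to the other needs the weak-type $(1,1)$ bound for the (restricted) maximal operator on each stopping ball, which you invoke only implicitly. Both are standard and easily patched; the overall plan is sound and matches the reference the paper leans on.
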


\begin{proof}[\bf Proof of Theorem  \ref{T1}]

Given any ball $B$ with either $2B \subset \Omega$ or $B$ centers on $\Gamma$, let $\varphi\in C_c^\infty(8B)$ is a cut-off function such that $0\le \varphi \le 1$ and
\begin{equation*}
\varphi =
\begin{cases}
& 1 \quad \text{ on } 4B\\
& 0 \quad \text{ outside } 8B
\end{cases}
\end{equation*}
and we decompose $u=v+w$ where $v,w$ satisfy
\begin{equation}
\label{01}
\left\{
\begin{aligned}
\mathcal{L} v &= \div (\varphi \bm{f}) \quad &&\text{ in } \Omega\\
A\nabla v\cdot \vn + \alpha v &= \varphi \bm{f}  \cdot \vn \quad &&\text{ on } \Gamma
\end{aligned}
\right.
\end{equation}
and
\begin{equation}
\label{02}
\left\{
\begin{aligned}
\mathcal{L} w &= \div \left( (1-\varphi) \bm{f}\right)  \quad &&\text{ in } \Omega\\
A\nabla w\cdot \vn  + \alpha w &= (1-\varphi) \bm{f} \cdot \vn \quad &&\text{ on } \Gamma .
\end{aligned}
\right.
\end{equation}
Multiplying (\ref{01}) by $v$ and integrating by parts, we get,
\begin{equation*}
\int\displaylimits_\Omega{A(x)\nabla v\cdot \nabla v} + \int\displaylimits_\Gamma {\alpha |v|^2} = \int\displaylimits_\Omega{\varphi \bm{f} \cdot \nabla v}
\end{equation*}
which gives
\begin{equation}
\label{L2.}
\|\nabla v\|_{\vL{2}} \le \frac{1}{\mu} \|\varphi\bm{f}\|_{\vL{2}} .
\end{equation}
and since $\alpha \ge \alpha_* > 0$ on $\Gamma$,
\begin{equation*}
\|v\|^2_{\H1} \le C(\Omega, \alpha_*) \left( \|\nabla v\|^2_{\vL{2}} + \int\displaylimits_\Gamma{\alpha |v|^2} \right) \le C(\Omega,\alpha_*) \ \|\varphi \bm{f}\|_{\vL{2}} \|\nabla v\|_{\vL{2}}.
\end{equation*}
This yields the complete $L^2$-estimate
\begin{equation}
\label{L2}
\|v\|_{\H1} \le C(\Omega,\alpha_*) \ \|\varphi\bm{f}\|_{\vL{2}} .
\end{equation}

{\bf (i)} First we consider the case $4B\subset \Omega$. We want to apply Lemma \ref{G0} with $G = |\nabla u|, G_B = |\nabla
 v|$ and $ R_B = |\nabla w|$. It is easy to see that
$$
|G| \le |G_B| + |R_B| .
$$
Now we verify (\ref{G1}) and (\ref{G2}). For that, using (\ref{L2.}) we get,
\begin{equation*}
\begin{aligned}
\frac{1}{|2B|} \int\displaylimits_{2B}{|G_B|^2} = \frac{1}{|2B|} \int\displaylimits_{2B}{|\nabla v|^2} \le \frac{1}{| 2B\cap\Omega|} \int\displaylimits_{ \Omega}{|\nabla v|^2}
& \le \frac{C(\Omega,\alpha_*)}{|2B\cap \Omega|} \int\displaylimits_{\Omega}{|\varphi \bm{f}|^2} \\
& \le \frac{C(\Omega,\alpha_*)}{|8B\cap \Omega|} \int\displaylimits_{8B \cap\Omega}{| \bm{f}|^2}
\end{aligned}
\end{equation*}
where in the last inequality, we used that $|8B\cap\Omega| \le |\Omega|$. This gives the estimate (\ref{G2}).

Next, from (\ref{02}), we observe that $\mathcal{L} w = 0 \text{ in } 4B$. Hence, by the weak reverse H\"{o}lder inequality in Theorem \ref{T3} (using $2B$ instead of $B$), we have
\begin{equation*}
\left( \frac{1}{|2B|} \int\displaylimits_{2B}{|\nabla w|^p}\right) ^{1/p} \le C \left( \frac{1}{|4B|}\int\displaylimits_{4B }{|\nabla w|^2}\right) ^{1/2} 
\end{equation*}
which implies together with (\ref{L2.}),
\begin{equation*}
\begin{aligned}
\left( \frac{1}{|2B|} \int\displaylimits_{2B}{|R_B|^p}\right) ^{1/p}
&\le C \left( \frac{1}{|4B|}\int\displaylimits_{4B }{|\nabla w|^2}\right) ^{1/2} \\
& \le C \left[ \left( \frac{1}{|4B|}\int\displaylimits_{4B }{|\nabla u|^2}\right) ^{1/2} + \left( \frac{1}{|4B|}\int\displaylimits_{4B }{|\nabla v|^2}\right) ^{1/2} \right] \\
& \le C \left( \frac{1}{|4B|}\int\displaylimits_{4B }{|G|^2}\right) ^{1/2} + C(\Omega,\alpha_*) \left( \frac{1}{|8B\cap \Omega|}\int\displaylimits_{8B \cap\Omega}{|\bm{f}|^2}\right) ^{1/2}.
\end{aligned}
\end{equation*}
This gives (\ref{G1}). So from Lemma \ref{G0}, it follows that
\begin{equation*}
\left(\frac{1}{|\Omega|}\int\displaylimits_{ \Omega}{|\nabla u|^q} \right) ^{1/q} \le C_p(\Omega) \left[ \left(\frac{1}{|\Omega|}\int\displaylimits_{\Omega}{|\nabla u|^2}\right)^{1/2} + \left(\frac{1}{|\Omega|}\int\displaylimits_{\Omega}{|\bm{f}|^q}\right)^{1/q}\right]
\end{equation*}
for any $2<q<p$ where $C_p(\Omega)>0$ does not depend on $\alpha$.

Because of the self-improving property of the weak Reverse H\"{o}lder condition (\ref{rhi}), the above estimate holds for any $q\in(2,\tilde{p})$ for some $\tilde{p}>p$ also and in particular, for $q=p$, which clearly implies (\ref{Lp}).\\

{\bf (ii)} Next consider $B$ centers on $\Gamma$. We apply Lemma \ref{G0} now with $G=|u| + |\nabla u|, G_B = |v|+|\nabla v|$ and $ R_B=|w|+|\nabla w|$. Obviously,
$
|G| \le |G_B| + |R_B|
$
and again by (\ref{L2}),
\begin{equation*}
\begin{aligned}
\frac{1}{|2B\cap \Omega|} \int\displaylimits_{2B\cap \Omega}{|G_B|^2} \le \frac{1}{|2B\cap \Omega|} \int\displaylimits_{2B\cap \Omega}{(|v|^2 +|\nabla v|^2)} &\le \frac{1}{|2B\cap \Omega|} \ \| v\|^2_{\H1} \\
&\le \frac{C(\Omega, \alpha_*)}{|2B\cap \Omega|} \int\displaylimits_{\Omega}{|\varphi \bm{f}|^2} \\
&\le \frac{C(\Omega,\alpha_*)}{|8B\cap \Omega|} \int\displaylimits_{8B \cap\Omega}{| \bm{f}|^2}
\end{aligned}
\end{equation*}
which yields (\ref{G2}). Also $w$ satisfies the problem
\begin{equation*}
\left\{
\begin{aligned}
\mathcal{L} w &= 0 \quad &&\text{ in } 4B \cap \Omega\\
A\nabla w\cdot \vn  + \alpha w &= 0 \quad &&\text{ on } 4B \cap\Gamma.
\end{aligned}
\right.
\end{equation*}
So by the weak reverse H\"{o}lder inequality in Theorem \ref{T3} and the estimate (\ref{L2.}), we can write,
\begin{equation*}
\begin{aligned}
&\left( \frac{1}{|2B\cap \Omega|} \int\displaylimits_{2B\cap \Omega}{|R_B|^p}\right) ^{1/p}\\
&\le \left( \frac{1}{|2B\cap \Omega|}\int\displaylimits_{2B \cap\Omega}{((| w|+ | \nabla w|)^2)^{p/2}}\right) ^{1/p} \\
&\le C \left( \frac{1}{|4B\cap \Omega|}\int\displaylimits_{4B \cap\Omega}{(| w|^2+ | \nabla w|^2)}\right) ^{1/2}\\
& \le C \left[ \left( \frac{1}{|4B\cap \Omega|}\int\displaylimits_{4B \cap\Omega}{(|u|^2+|\nabla u|^2)}\right) ^{1/2} + \left( \frac{1}{|4B\cap \Omega|}\int\displaylimits_{4B \cap\Omega}{(|v|^2+|\nabla v|^2)}\right) ^{1/2} \right] \\
& \le C \left( \frac{1}{|4B\cap \Omega|}\int\displaylimits_{4B \cap\Omega}{|G|^2}\right) ^{1/2} + C(\Omega,\alpha_*) \left( \frac{1}{|8B\cap \Omega|}\int\displaylimits_{8B \cap\Omega}{|\bm{f}|^2}\right) ^{1/2}
\end{aligned}
\end{equation*}
which yields (\ref{G1}). Thus we have,
$$
\left(\frac{1}{|\Omega|}\int\displaylimits_{ \Omega}{(|u|+|\nabla u|)^q} \right) ^{1/q} \le C_p(\Omega, \alpha_*) \left[ \left(\frac{1}{|\Omega|}\int\displaylimits_{\Omega}{|u|^2+|\nabla u|^2}\right)^{1/2} + \left(\frac{1}{|\Omega|}\int\displaylimits_{\Omega}{|\bm{f}|^q}\right)^{1/q}\right]
$$
for any $2<q<p$ where $C_p(\Omega,\alpha_*)>0$ does not depend on $\alpha$. This completes the proof together with the previous case.
\hfill
\end{proof}

The next proposition will be used to study the complete estimate of the Robin problem (\ref{robin1}). The result is not optimal and will be improved in Proposition \ref{P2}.

\begin{proposition}[{\bf \boldmath$\W{1}{p}$ estimate, $p>2$ with RHS $F$}]
\label{P0}
Let $\Omega$ be a $\mathcal{C}^1$ bounded domain in $\R^3$, $p>2$, and $F\in\L{p}$. Suppose that the coefficients of the operator $\mathcal{L}$, defined in (\ref{10}), are also symmetric and in $VMO(\R^3)$. Then the unique solution $u\in\W{1}{p}$ of (\ref{robin1}) with $\bm{f}=\bm{0}$ and $g=0$, satisfies the following estimate:
\begin{equation}
\label{Lp0}
\|u\|_{\W{1}{p}} \le C_p(\Omega,\alpha_*) \ \|F \|_{\L{p}} 
\end{equation}
where the constant $C_p(\Omega,\alpha_*)>0$ is independent of $\alpha$.
\end{proposition}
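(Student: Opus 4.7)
The plan is to adapt the weak-reverse-H\"older-based argument used for Theorem~\ref{T1}, with the vector source $\bm{f}$ there replaced by the scalar source $F$. Existence of $u\in \W{1}{p}$ is already granted by Theorem~\ref{T0}, so the task is to produce the $\alpha$-uniform bound. Given a ball $B=B(x,r)$ with $|B|\le \beta|\Omega|$ and either $4B\subset\Omega$ or $x\in\Gamma$, I pick a cut-off $\varphi\in C_c^\infty(8B)$ with $\varphi\equiv 1$ on $4B$, and decompose $u=v+w$, where $v$ solves (\ref{robin1}) with data $(\bm{0},\varphi F,0)$ and $w=u-v$ solves (\ref{robin1}) with data $(\bm{0},(1-\varphi)F,0)$. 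Since $(1-\varphi)F$ vanishes on $4B$, the remainder $w$ is a solution of the \emph{homogeneous} Robin problem on $4B\cap\Omega$, so Theorem~\ref{T3} applies to it. This is exactly the decomposition strategy of the proof of Theorem~\ref{T1}.

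The new ingredient is the $L^2$ bound for $v$. Testing the variational formulation of the $v$-problem with $v$ itself, using ellipticity of $A$, the assumption $\alpha\ge\alpha_*>0$ (through Friedrichs's inequality, which controls $\|v\|_{\H{1}}^2$ by $\|\nabla v\|_{L^2}^2+\int_\Gamma\alpha v^2$), and Cauchy-Schwarz on the forcing, one gets
\[
 \|v\|_{\H{1}}^{2}\le C(\Omega,\alpha_*)\,\|\varphi F\|_{L^2(\Omega)}\,\|v\|_{L^2(\Omega)},
\]
hence $\|v\|_{\H{1}}\le C(\Omega,\alpha_*)\,\|F\|_{L^2(8B\cap\Omega)}$. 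For $w$, Theorem~\ref{T3} yields the weak reverse H\"older inequality bounding $\|\nabla w\|_{L^p(2B\cap\Omega)}$ (and $\|w\|_{L^p(2B\cap\Omega)}$ at the boundary) by the $L^2$ norm of $|w|+|\nabla w|$ on $4B\cap\Omega$.

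I then apply Lemma~\ref{G0} with $G=|u|+|\nabla u|$, $G_B=|v|+|\nabla v|$, $R_B=|w|+|\nabla w|$, and $f=F$. Condition (G2) follows from the $v$-estimate above after normalization, since $|8B\cap\Omega|\sim|2B\cap\Omega|$ makes the rescaling constants ball-independent; condition (G1) follows from the weak reverse H\"older inequality for $w$ combined with $w=u-v$ and the same $v$-estimate. Lemma~\ref{G0} then yields, for every $2<q<p$,
\[
 \|u\|_{\W{1}{q}}\le C(\Omega,\alpha_*)\bigl(\|u\|_{\H{1}}+\|F\|_{\L{q}}\bigr).
\]
Testing (\ref{robin1}) with $u$ gives the $\alpha$-uniform $H^1$ bound $\|u\|_{\H{1}}\le C(\Omega,\alpha_*)\|F\|_{L^{6/5}(\Omega)}\le C\,\|F\|_{\L{q}}$, and the self-improving property of the weak reverse H\"older inequality allows one to push $q$ up to $p$, establishing (\ref{Lp0}). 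The main subtlety I anticipate is keeping the constants $\alpha$-independent through the Friedrichs step, which crucially relies on $\alpha\ge\alpha_*>0$ and on the positivity of the boundary contribution $\int_\Gamma\alpha v^2$; once this is handled, the remainder of the argument mirrors the proof of Theorem~\ref{T1} almost verbatim, with $F$ playing the role previously played by $\bm{f}$.
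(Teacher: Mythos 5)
Your proposal is correct and follows exactly the approach the paper intends: the paper's own proof of Proposition~\ref{P0} is a one-line remark that "the result follows using the same argument as in Theorem~\ref{T1}," and what you have written out is precisely that argument, with the scalar source $F$ replacing $\bm{f}$, the modified forcing term $-\int_\Omega \varphi F v$ (rather than $\int_\Omega \varphi\bm{f}\cdot\nabla v$) handled through Friedrichs's inequality to produce the $\alpha$-uniform $H^1$ bound on $v$, and Lemma~\ref{G0} together with Theorem~\ref{T3} and self-improvement supplying the rest.
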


\begin{proof}
The result follows using the same argument as in Theorem \ref{T1} and hence we do not repeat it.
\hfill
\end{proof}

\begin{proposition}[{\bf \boldmath$\W{1}{p}$ estimate with RHS $\bm{f}$}]
	\label{P1}
	Let $\Omega$ be a $\mathcal{C}^1$ bounded domain in $\R^3$, $p\in(1,\infty)$ and $\bm{f}\in\vL{p}$. Suppose that the coefficients of the operator $\mathcal{L}$, defined in (\ref{10}), are also symmetric and in $VMO(\R^3)$. Then there exists a unique solution $u\in\W{1}{p}$ of (\ref{robin1}) with $F=0$ and $g=0$, satisfying the following estimate:
	\begin{equation}
	\label{Lp2}
	\|u\|_{\W{1}{p}} \le C_p(\Omega,\alpha_*) \ \|\bm{f} \|_{\vL{p}} 
	\end{equation}
	where the constant $C_p(\Omega,\alpha_*)>0$ is independent of $\alpha$.
\end{proposition}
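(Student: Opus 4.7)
The range $p \geq 2$ is already covered by Theorem \ref{T1}, so the remaining case is $1 < p < 2$; the natural strategy is duality against Theorem \ref{T1} at the conjugate exponent $p' > 2$, combined with a density approximation to produce a genuine solution.

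Given $\bm{f} \in \vL{p}$, I would approximate by $\bm{f}_n \in \mathcal{C}^\infty(\overline{\Omega})^3$ with $\bm{f}_n \to \bm{f}$ in $\vL{p}$. Each $\bm{f}_n$ also lies in $\vL{2}$, hence Theorem \ref{T0} delivers $u_n \in \W{1}{q}$ for every $q \in [2,\infty)$. To bound $u_n$ in $\W{1}{p}$ independently of $\alpha$, I proceed by duality. For any test field $\bm{h} \in \mathcal{C}^\infty(\overline{\Omega})^3$, let $v_{\bm{h}} \in \W{1}{p'}$ be the solution of the Robin problem with datum $\bm{h}$,
\begin{equation*}
\mathcal{L} v_{\bm{h}} = \div \bm{h} \text{ in } \Omega, \qquad (A \nabla v_{\bm{h}} - \bm{h})\cdot \vn + \alpha v_{\bm{h}} = 0 \text{ on } \Gamma.
\end{equation*}
Since $p' > 2$, Theorem \ref{T1} supplies $\|v_{\bm{h}}\|_{\W{1}{p'}} \leq C_{p'}(\Omega, \alpha_*) \|\bm{h}\|_{\vL{p'}}$ with a constant independent of $\alpha$. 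Testing the variational formulation for $u_n$ against $v_{\bm{h}}$ and the one for $v_{\bm{h}}$ against $u_n$, the symmetry of $A$ makes the two bilinear forms coincide and leaves
\begin{equation*}
\int_\Omega \bm{h}\cdot \nabla u_n \;=\; \int_\Omega \bm{f}_n \cdot \nabla v_{\bm{h}}.
\end{equation*}
Taking the supremum over $\|\bm{h}\|_{\vL{p'}} \leq 1$ yields $\|\nabla u_n\|_{\vL{p}} \leq C_p(\Omega, \alpha_*) \|\bm{f}_n\|_{\vL{p}}$. The same device, with $v_{\bm{h}}$ replaced by the solution $w_\phi \in \W{1}{p'}$ of the Robin problem with right-hand side $F = -\phi$ in place of $\div \bm{h}$ (for $\phi \in \L{p'}$) and using Proposition \ref{P0} in place of Theorem \ref{T1}, produces $\int_\Omega \phi\, u_n = \int_\Omega \bm{f}_n \cdot \nabla w_\phi$, hence $\|u_n\|_{\L{p}} \leq C_p(\Omega, \alpha_*) \|\bm{f}_n\|_{\vL{p}}$.

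With the uniform bound in hand, extract a weak limit $u_n \rightharpoonup u$ in $\W{1}{p}$. Weak convergence of gradients in $\vL{p}$ together with strong trace convergence through a compact embedding of $\W{1}{p}$ into an appropriate Lebesgue space on $\Gamma$ lets one pass to the limit in the variational formulation tested against any $\varphi \in \W{1}{p'}$, producing a $\W{1}{p}$ solution that inherits the estimate. Uniqueness follows by applying the same duality identity to a would-be $\W{1}{p}$ solution of the homogeneous problem. The genuinely delicate step in this scheme is to verify that the boundary pairing $\int_\Gamma \alpha u \varphi$ is continuous on $\W{1}{p} \times \W{1}{p'}$ under the hypothesis $\alpha \in \Lb{t(p)}$: this is exactly the motivation for the case-by-case choice of the exponent $t(p)$ in (\ref{def_exponent_tp_alpha}), and the computation that justifies it reproduces the Sobolev-trace bookkeeping carried out in the proof of Theorem \ref{T0}. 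Once this pairing is made rigorous, the approximation and duality arguments go through without further obstacle.
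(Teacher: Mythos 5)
Your proof follows essentially the same route as the paper: duality against Theorem \ref{T1} at the exponent $p' > 2$ for the gradient estimate, duality against Proposition \ref{P0} for the $L^p$-norm of $u$, and density approximation for existence. The only difference is that the paper passes to the limit more cleanly: applying the a priori estimate to $u_k - u_\ell$ (which itself solves the Robin problem with datum $\bm{f}_k - \bm{f}_\ell$) shows $\{u_k\}$ is Cauchy in $\W{1}{p}$, giving strong convergence directly and sidestepping the weak-limit extraction and trace-convergence machinery you invoke.
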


\begin{proof}
The existence of a unique solution and the corresponding estimate for $p>2$ is done in Theorem \ref{T0} and Theorem \ref{T1} respectively. Now suppose that $1<p<2$. We first discuss the estimate and then the existence of a solution.\\
\\
{\bf (i)} {\bf Estimate I:} Let $\bm{g}\in C_0^\infty (\Omega)$ and $v\in\W{1}{p'}$ be the solution of $\mathcal{L} v = \div \ \bm{g}$ in $\Omega$ and $\frac{\partial v}{\partial \vn} + \alpha v = 0$ on $\Gamma$. Since $p'>2$, from Theorem \ref{T1}, we have
	$$
	\|v\|_{\W{1}{p'}} \le C_p(\Omega,\alpha_*) \|\bm{g}\|_{\vL{p'}} .
	$$
	Also if $u\in\W{1}{p}$ is a solution of (\ref{robin1}) with $F=0, g=0$, using the weak formulation of the problems satisfied by $u$ and $v$, we have
	$$
	\int\displaylimits_\Omega{\bm{f}\cdot \nabla v} = 
	\int\displaylimits_\Omega{\bm{g} \cdot \nabla u}
	$$
	which gives,
	$$
	|\int\displaylimits_\Omega{\bm{g} \cdot \nabla u}| \le \|\bm{f}\|_{\vL{p}} \|\nabla v\|_{\vL{p'}} \le \|\bm{f}\|_{\vL{p}} \| v\|_{\W{1}{p'}}
	$$
	and hence,
	$$
	\|\nabla u\|_{\vL{p}} = \underset{0\neq \bm{g}\in\vL{p'}}{\sup}\frac{|\int\displaylimits_\Omega{\nabla u\cdot \bm{g} }|}{\|\bm{g}\|_{\vL{p'}}} \le C_p(\Omega, \alpha_*) \|\bm{f}\|_{\vL{p}}.
	$$
{\bf (ii)} {\bf Estimate II:} Next we prove that 
\begin{equation}
\label{Lp2.}
\|u\|_{\L{p}} \le C_p(\Omega, \alpha_*) \|\bm{f}\|_{\vL{p}}.
\end{equation}
For that, from Proposition \ref{P0}, we get for any $\varphi\in\L{p'}$, the unique solution $w\in \W{1}{p'}$ of the problem
\begin{equation*}
\left\{
\begin{aligned}
\mathcal{L} w &= \varphi \quad &&\text{ in } \Omega\\
A\nabla w\cdot \vn  + \alpha w &= 0  \quad &&\text{ on }\Gamma
\end{aligned}
\right.
\end{equation*}
satisfies
$$
\|w\|_{\W{1}{p'}}\le C_p(\Omega,\alpha_*) \ \|\varphi\|_{\vL{p'}}.
$$
Therefore using the weak formulation of the problems satisfied by $u$ and $w$, we obtain,
\begin{align*}
\int\displaylimits_\Omega{u \ \varphi} = \int\displaylimits_\Omega{ u \, \div (A(x)\nabla) w} = -\int\displaylimits_\Omega{A(x)\nabla u \cdot \nabla w} + \int\displaylimits_\Gamma{u A\nabla w\cdot \vn}  = - \int\displaylimits_\Omega{\bm{f} \cdot \nabla w}
\end{align*}
which implies
\begin{equation*}
\|u\|_{\L{p}} = \underset{0\neq \varphi \in\L{p'}}{\sup}\frac{|\int\displaylimits_\Omega{ u \ \varphi }|}{\|\varphi\|_{\L{p'}}}\le C_p(\Omega,\alpha_*) \ \|\bm{f}\|_{\vL{p}}.
\end{equation*}
This completes proof of the estimate (\ref{Lp2}).\\
\\
{\bf (iii)} {\bf Existence and uniqueness:} The uniqueness of solution of (\ref{robin1}) follows from (\ref{Lp2}). For the existence, we will use a limit argument. Let $\{\bm{f}_k\} \in C_0^\infty (\Omega)$ such that
$$ \bm{f}_k \rightarrow \bm{f} \quad \text{ in } \vL{p} $$
and $u_k \in \W{1}{p'}$ be the unique solution of 
\begin{equation} \label{1}
\left\{
\begin{aligned}
\mathcal{L} u_k &= \div \ \bm{f_k} \quad &&\text{ in } \Omega\\
(A\nabla u_k - \bm{f_k})\cdot \vn  + \alpha u_k &= 0 \quad &&\text{ on } \Gamma
\end{aligned}
\right.
\end{equation}
Note that $u_k \in \W{1}{p}$ since $p'>2$. Also from {\bf (i)} we have,
$$
\| u_k\|_{\W{1}{p}} \le C_p(\Omega,\alpha_*) \ \|\bm{f}_k\|_{\vL
p}
$$
and
$$
\| u_k-u_{\ell}\|_{\W{1}{p}} \le C_p(\Omega,\alpha_*) \ \|\bm{f}_k-\bm{f}_{\ell}\|_{\vL p}.
$$
Thus it follows $u_k-u_{\ell} \rightarrow 0$ in $\W{1}{p}$ as $k,{\ell}\rightarrow \infty$ i.e. $\{u_k\}$ is a Cauchy sequence in $\W{1}{p}$. Then as $\W{1}{p}$ is a Banach space, there exists $u\in\W{1}{p}$ such that
$$u_k\rightarrow u\text{ in } \W{1}{p}$$
satisfying
$$\|u\|_{\W{1}{p}}\le C_p(\Omega,\alpha_*) \ \|\bm{f}\|_{\vL{p}}.$$
Clearly $u$ also solves the system (\ref{robin1}).
\hfill
\end{proof}

\begin{proposition}[{\bf \boldmath$\W{1}{p}$ estimate with RHS $F$}]
	\label{P2}
	Let $\Omega$ be a $\mathcal{C}^1$ bounded domain in $\R^3$, $p\in(1,\infty)$, $F\in \L{r(p)}$ and $g\in \Wfracb{-\frac{1}{p}}{p}$. Suppose that the coefficients of the operator $\mathcal{L}$, defined in (\ref{10}), are symmetric and in $VMO(\R^3)$. Then the solution $u\in\W{1}{p}$ of the problem
	\begin{equation}
	\label{robin2}
	\left\{
	\begin{aligned}
	\mathcal{L} u &= F \quad &&\text{ in } \Omega\\
	A\nabla u\cdot \vn  + \alpha u &= g \quad &&\text{ on } \Gamma
	\end{aligned}
	\right.
	\end{equation}
satisfies the following estimate:
	\begin{equation}
	\label{Lp3}
	\|u\|_{\W{1}{p}} \le C_p(\Omega,\alpha_*) \left(  \|F \|_{\L{r(p)}} + \|g\|_{\Wfracb{-\frac{1}{p}}{p}}\right) 
	\end{equation}
	where the constant $C_p(\Omega,\alpha_*)>0$ is independent of $\alpha$.
\end{proposition}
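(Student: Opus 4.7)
The plan is a duality argument parallelling the proof of Proposition \ref{P1}, with Propositions \ref{P0} and \ref{P1} serving as the auxiliary ingredients. I split the estimate \eqref{Lp3} into a bound on $\|\nabla u\|_{\vL{p}}$ and a bound on $\|u\|_{\L{p}}$, each obtained by testing $u$ against the solution of a suitably chosen adjoint Robin problem and letting the symmetry of $A$ transfer the data from the $u$-problem onto the adjoint.

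\textbf{Gradient bound.} For any $\bm{\phi}\in\vL{p'}$, Proposition \ref{P1} produces a unique $v\in\W{1}{p'}$ satisfying
\begin{equation*}
\mathcal{L}v = \div\bm{\phi}\ \text{in}\ \Omega,\qquad (A\nabla v-\bm{\phi})\cdot\vn+\alpha v=0\ \text{on}\ \Gamma,
\end{equation*}
with $\|v\|_{\W{1}{p'}}\le C\|\bm{\phi}\|_{\vL{p'}}$ and $C$ independent of $\alpha$. Taking $v$ as test function in \eqref{E} for $u$ and $u$ as test function in the weak formulation of the $v$-problem, the symmetry of $A$ cancels the bilinear parts and leaves
\begin{equation*}
\int_\Omega \bm{\phi}\cdot\nabla u \;=\; -\int_\Omega Fv + \langle g, v\rangle_\Gamma.
\end{equation*}
The exponent $r(p)$ in \eqref{def:exponent_rp} is chosen precisely so that $\W{1}{p'}\hookrightarrow \L{r(p)'}$ in each of its three regimes, and the trace theorem gives $\|v\|_{\Wfracb{\frac{1}{p}}{p'}}\le C\|v\|_{\W{1}{p'}}$. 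Thus the right-hand side is bounded by $C(\|F\|_{\L{r(p)}}+\|g\|_{\Wfracb{-\frac{1}{p}}{p}})\|\bm{\phi}\|_{\vL{p'}}$, and taking the supremum over $\bm{\phi}$ produces the gradient estimate.

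\textbf{$L^p$ bound and existence.} For $\varphi\in\L{p'}$ I solve the homogeneous Robin problem $\mathcal{L}w=\varphi$ in $\Omega$, $A\nabla w\cdot\vn+\alpha w=0$ on $\Gamma$, aiming for $\|w\|_{\W{1}{p'}}\le C\|\varphi\|_{\L{p'}}$. When $p\le 2$ (so $p'\ge 2$) this is supplied by Proposition \ref{P0}; when $p>2$ (so $p'<2$) it is obtained from the present proposition at the dual exponent $p'$, via the embedding $\L{p'}\hookrightarrow\L{r(p')}$ on the bounded domain that absorbs the mismatch between $L^{p'}$ and $L^{r(p')}$ hypotheses. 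The analogous symmetry-duality identity now reads $\int_\Omega u\varphi = \int_\Omega Fw - \langle g, w\rangle_\Gamma$, and the Sobolev/trace bounds on $w$ deliver the desired $L^p$ bound. Existence of $u$ is handed over by Theorem \ref{T0} for $p\ge 2$ and, for $1<p<2$, obtained by approximating $(F,g)$ with smooth data and passing to the limit using the just-proved estimate, exactly as in the existence step of Proposition \ref{P1}.

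\textbf{Main obstacle.} The subtle point is the bootstrap ordering in the $L^p$ estimate: invoking Proposition \ref{P2} at $p'<2$ requires the case $1<p\le 2$ to be in hand, so the proof must be organised as ``case $1<p\le 2$ via Proposition \ref{P0}, then case $p>2$ via the previous case.'' A secondary technical check is that $\W{1}{p'}\hookrightarrow \L{r(p)'}$ is the sharp Sobolev embedding in each of the three regimes of \eqref{def:exponent_rp}; this is precisely what motivates the piecewise definition of $r(p)$.
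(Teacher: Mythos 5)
Your proposal is correct and follows the same duality route as the paper: for the gradient bound you test against the solution $v$ of the adjoint Robin problem with RHS $\div\bm{\phi}$ supplied by Proposition~\ref{P1}, and for the $L^p$ bound you test against $w$ solving the Robin problem with RHS $\varphi$, using the symmetry of $A$ in both identities exactly as the paper does, together with the embedding $\W{1}{p'}\hookrightarrow\L{r(p)'}$ that motivates~\eqref{def:exponent_rp}. The only place you go beyond what the paper spells out is the $L^p$-bound step when $p>2$ (so $p'<2$): the paper simply remarks parenthetically that the estimate on $w$ follows ``by the exact same argument as in Proposition~\ref{P1},'' whereas you make the bootstrap ordering explicit by first settling $1<p\le 2$ via Proposition~\ref{P0} and then invoking the just-proved case of the present proposition at $p'$ with the embedding $\L{p'}\hookrightarrow\L{r(p')}$; this is a more self-contained way of closing the same gap, not a different proof.
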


\begin{proof}
It suffices to prove the estimate since the existence and uniqueness of $u$ follows from the same argument as in Proposition \ref{P1}.

{\bf(i)} {\bf Estimate I:} Let $\bm{f}\in C_0^\infty (\Omega)$ and $v\in\W{1}{p'}$ be the weak solution of $\mathcal{L} v = \div \ \bm{f}$ in $\Omega$ and $(A\nabla v - \bm{f} )\cdot \vn + \alpha v = 0$ on $\Gamma$. By Proposition \ref{P1}, we then have
	$$
	\|v\|_{\W{1}{p'}} \le C_p(\Omega,\alpha_*) \|\bm{f}\|_{\vL{p'}} .
	$$
Also, if $u\in\W{1}{p}$ is a solution of (\ref{robin2}), from the weak formulation of the problems satisfied by $u$ and $v$, we get
	$$
	\int\displaylimits_{\Omega}{\bm{f}\cdot \nabla u} = \int\displaylimits_{\Omega}{A(x)\nabla u\cdot \nabla v} + \int\displaylimits_{\Gamma}{\alpha u v} = - \int\displaylimits_{\Omega}{F v}+\left\langle g ,v\right\rangle _{\Gamma} .
	$$
This implies
	\begin{equation*}
	\begin{aligned}
	|\int\displaylimits_\Omega{\bm{f}\cdot \nabla u}| &\le \|F\|_{\L{r(p)}} \|v\|_{\L{(r(p))'}} + \|g\|_{\Wfracb{-\frac{1}{p}}{p}} \|v\|_{\Wfracb{\frac{1}{p}}{p'}}\\
	&\le C_p(\Omega) \left( \|F\|_{\L{r(p)}}+ \|g\|_{\Wfracb{-\frac{1}{p}}{p}}\right)  \|v\|_{\W{1}{p'}}
	\end{aligned}
	\end{equation*}
	since $\frac{1}{(p')*} = \frac{1}{p'}-\frac{1}{3} = \frac{1}{(r(p))'}$ for $p> \frac{3}{2}$ and $\W{1}{p'}\hookrightarrow \L{\infty}$ when $p<\frac{3}{2}$. Thus, 
	\begin{equation*}
	\begin{aligned}
	\|\nabla u\|_{\vL{p}} = \underset{0\neq \bm{f}\in \vL{p'}}{\sup}\frac{\left| \int\displaylimits_{\Omega}{\nabla u\cdot \bm{f}}\right| }{\|\bm{f}\|_{\vL{p'}}} \le  C_p(\Omega,\alpha_*) \left( \|F\|_{\L{r(p)}}+ \|g\|_{\Wfracb{-\frac{1}{p}}{p}}\right)  .
	\end{aligned}
	\end{equation*}
{\bf (ii)} {\bf Estimate II:} Next we prove the following bound as done in (\ref{Lp2.}):
\begin{equation}
\label{Lp5}
\|u\|_{\L{p}} \le C_p(\Omega,\alpha_*) \left( \|F\|_{\L{r(p)}}+ \|g\|_{\Wfracb{-\frac{1}{p}}{p}}\right)
\end{equation}
except that we do not need to assume $p<2$ here as in (\ref{Lp2.}). 
For any $\varphi\in\L{p'}$, there exists a unique $w\in\W{1}{p'}$ solving the problem
\begin{equation*}
\left\{
\begin{aligned}
\mathcal{L} w &= \varphi \quad &&\text{ in } \Omega\\
A\nabla w\cdot \vn + \alpha w &= 0  \quad &&\text{ on }\Gamma
\end{aligned}
\right.
\end{equation*}
and satisfying
$$
\|w\|_{\W{1}{p'}} \le C_p(\Omega,\alpha_*) \|\varphi\|_{\L{p'}}.
$$
(For $p < 2$ the above estimate can be proved by the exact same argument as in Proposition (\ref{P1})). Finally we may write,
\begin{equation*}
\int\displaylimits_{\Omega}{u \ \varphi} =\int\displaylimits_{\Omega}{u\,\,\mathcal{L} w} = \int\displaylimits_{\Omega}{\mathcal{L} u \ w } - \int\displaylimits_{\Gamma}{(A\nabla u\cdot \vn) w} + \int\displaylimits_{\Gamma}{u(A\nabla w)\cdot \vn } =  \int\displaylimits_{\Omega}{F w } - \left\langle g, w\right\rangle _{\Gamma} 
\end{equation*}
which yields as before
$$
\|u\|_{\L{p}} \le C_p(\Omega,\alpha_*) \left( \|F\|_{\L{r(p)}}+ \|g\|_{\Wfracb{-\frac{1}{p}}{p}}\right)
$$
and thus we obtain (\ref{Lp5}).
\hfill
\end{proof}

\begin{proof}[\bf Proof of Theorem  \ref{T2}]
Let $u_1\in\W{1}{p}$ be the weak solution of
\begin{equation*}
\left\{
\begin{aligned}
\div (A(x)\nabla u_1) &= \div \bm{f} \quad  \text{ in } \Omega\\
(A\nabla u_1 - \bm{f})\cdot \vn + \alpha u_1 &=   0 \quad \text{ on } \Gamma
\end{aligned}
\right.
\end{equation*}
given by Proposition \ref{P1} and $u_2\in\W{1}{p}$ be the weak solution of
\begin{equation*}
\left\{
\begin{aligned}
\div (A\nabla u_2) &=  F \quad  \text{ in } \Omega\\
A\nabla u_2\cdot \vn + \alpha u_2 &= g \quad \text{ on } \Gamma
\end{aligned}
\right.
\end{equation*}
given by Proposition \ref{P2}. Then $u=u_1+u_2$ is the solution of the problem (\ref{robin1}) which also satisfies the estimate (\ref{Lp1}).
\hfill
\end{proof}

Next we prove uniform $H^s$ bound for $s\in(0,\frac{1}{2})$.
\begin{proposition}
\label{P4}
Let $\Omega$ be a Lipschitz bounded domain in $\R^{3}$, $g\in \Lb{2}$ and $\alpha$ is a constant. Suppose that the coefficients of the operator $\mathcal{L}$, defined in (\ref{10}), are symmetric and in $VMO(\R^3)$. Then the problem
\begin{equation}
\label{robin3}
\left\{
\begin{aligned}
&\mathcal{L} u = 0 \quad &&\text{ in } \Omega\\
&A\nabla u\cdot \vn + \alpha u = g \quad &&\text{ on } \Gamma
\end{aligned}
\right.
\end{equation}
has a solution $u\in \H{3/2}$ which also satisfies the estimate
\begin{equation}
\|u\|_{\H{3/2}} \le C(\Omega) \|g\|_{\Lb{2}}.
\end{equation}
\end{proposition}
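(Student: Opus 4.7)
The strategy is to reinterpret the Robin condition as a Neumann condition with boundary data $\tilde g := g - \alpha u$, observe that $\tilde g$ is controlled in $\Lb{2}$ \emph{uniformly in $\alpha$}, and then invoke the $H^{3/2}$-regularity theory for the Neumann problem on Lipschitz domains.

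Since $\Lb{2}\hookrightarrow \Hfracb{-1}{2}$, the $p=2$ case of Theorem \ref{T0} (which only requires $\Omega$ Lipschitz, by the remark following) produces a unique $u\in\H{1}$. Testing the variational formulation with $\varphi=u$ and applying Young's inequality $\int_\Gamma gu \le \tfrac{1}{2\alpha}\|g\|_{\Lb{2}}^2 + \tfrac{\alpha}{2}\|u\|_{\Lb{2}}^2$ yields
\[
\mu\|\nabla u\|_{\vL{2}}^2 + \tfrac{\alpha}{2}\|u\|_{\Lb{2}}^2 \le \tfrac{1}{2\alpha}\|g\|_{\Lb{2}}^2.
\]
In particular this gives the $\alpha$-uniform trace bound $\|\alpha u\|_{\Lb{2}}\le \|g\|_{\Lb{2}}$ and also $\|u\|_{\H{1}}\le C(\alpha_*)\|g\|_{\Lb{2}}$. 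Consequently $\tilde g = g-\alpha u$ belongs to $\Lb{2}$ with $\|\tilde g\|_{\Lb{2}}\le 2\|g\|_{\Lb{2}}$, and $u$ solves the pure Neumann problem $\mathcal{L}u = 0$ in $\Omega$, $A\nabla u\cdot \vn = \tilde g$ on $\Gamma$.

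The final step is to apply the $H^{3/2}(\Omega)$-regularity estimate for the Neumann problem on a Lipschitz domain with $\Lb{2}$ data. For the constant-coefficient case this is the classical Jerison--Kenig theorem; its extension to symmetric $VMO$-coefficient operators is carried out by the same perturbation scheme used in Lemma \ref{Lapprox} --- freezing the coefficients locally, comparing with the constant-coefficient Neumann solution to which Jerison--Kenig applies, and absorbing the $VMO$ error via the modulus $h(r)\to 0$. This produces
\[
\|u\|_{\H{3/2}} \le C(\Omega)\bigl(\|\tilde g\|_{\Lb{2}} + \|u\|_{\L{2}}\bigr) \le C(\Omega)\|g\|_{\Lb{2}},
\]
where the second inequality absorbs the lower-order term through the $H^1$-bound from Step~1.

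The main obstacle is precisely this last $H^{3/2}$-Neumann estimate on Lipschitz domains for divergence-form operators with $VMO$ coefficients: the constant-coefficient Laplacian case is classical (Jerison--Kenig), but the $VMO$ upgrade requires carefully combining that result with the freezing/approximation argument of Lemma \ref{Lapprox}, while making sure the resulting constant stays independent of $\alpha$ --- which it does because the reduction to a Neumann problem has already extracted all of the $\alpha$-dependence into the controlled quantity $\tilde g$.
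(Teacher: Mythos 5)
Your proposal is correct and follows essentially the same route as the paper: test the variational formulation with $u$ to extract the $\alpha$-uniform trace bound $\|\alpha u\|_{\Lb{2}} \le \|g\|_{\Lb{2}}$ (you via Young's inequality with parameter $\alpha$, the paper by first multiplying through by $\alpha$ and applying Cauchy--Schwarz — both give the same bound), then reinterpret the equation as a Neumann problem with data $g-\alpha u$ and invoke the Jerison--Kenig $H^{3/2}$ Neumann regularity on a Lipschitz domain. You are in fact somewhat more explicit than the paper on the point that Jerison--Kenig as stated is for constant coefficients and that some approximation scheme is needed to pass to $VMO$ coefficients; the paper simply cites \cite[Theorem 2]{kenig-jerison} without comment.
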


\begin{proof}
A solution $u\in\H1$ of the problem (\ref{robin3}) satisfies the variational formulation:
\begin{equation*}
\forall \varphi \in\H{1}, \qquad \int\displaylimits_{ \Omega}{A(x)\nabla u\cdot \nabla \varphi} + \int\displaylimits_{\Gamma}{\alpha u\varphi} = \int\displaylimits_{\Gamma}{g\varphi}.
\end{equation*}
Multiplying the above relation by $\alpha$ and substituting $\varphi =u$, we get
\begin{equation*}
\alpha \int\displaylimits_{ \Omega}{A(x)\nabla u\cdot \nabla u} + \|\alpha u\|^2_{\Lb{2}} = \alpha \int\displaylimits_{\Gamma}{g u}\le  \|g\|_{\Lb{2}} \|\alpha u\|_{\Lb{2}}
\end{equation*}
and thus
\begin{equation*}
\|\alpha u\|_{\Lb{2}}\le \|g\|_{\Lb{2}}.
\end{equation*}
Now from the regularity result for Neumann problem \cite[Theorem 2]{kenig-jerison}, we obtain
\begin{equation*}
\|u\|_{\H{\frac{3}{2}}} \le C(\Omega) \|g-\alpha u\|_{\Lb{2}} \le C(\Omega) \|g\|_{\Lb{2}}
\end{equation*}
which gives the required estimate.
\hfill
\end{proof}

\begin{theorem}[{\bf \boldmath$H^s(\Omega)$ estimate}]
\label{Hs}
Let $\Omega$ be a Lipschitz bounded domain in $\R^{3}$, $s\in (0,\frac{1}{2})$ and $\alpha$ is a constant. Then for $g\in H^{s-\frac{1}{2}}(\Gamma)$, the problem (\ref{robin3}) has a solution $u\in \H{1+s}$ which also satisfies the estimate
\begin{equation*}
\|u\|_{\H{1+s}} \le C(\Omega) \|g\|_{H^{s-\frac{1}{2}}(\Gamma)}.
\end{equation*}
\end{theorem}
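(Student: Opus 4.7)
The approach I would take is complex interpolation between two natural endpoint estimates. Introduce the (linear) solution operator $T: g \mapsto u$ for the Robin problem \eqref{robin3}.

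First, at the endpoint $s=0$: for $g \in H^{-1/2}(\Gamma)$ the variational formulation
\begin{equation*}
\int_\Omega A\nabla u \cdot \nabla \varphi + \int_\Gamma \alpha\, u\, \varphi \;=\; \langle g, \varphi \rangle_\Gamma, \qquad \forall\, \varphi \in H^1(\Omega),
\end{equation*}
is coercive on $H^1(\Omega)$ since $\alpha$ is a positive constant (this is exactly the $p=2$ step of Theorem \ref{T0}). Lax-Milgram delivers a unique $u \in H^1(\Omega)$ with $\|u\|_{H^1(\Omega)} \le C(\Omega,\alpha) \|g\|_{H^{-1/2}(\Gamma)}$, so $T$ is bounded from $H^{-1/2}(\Gamma)$ into $H^1(\Omega)$.

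Second, at the endpoint $s=1/2$: Proposition \ref{P4} asserts that $T$ is bounded from $L^2(\Gamma)$ into $H^{3/2}(\Omega)$. Uniqueness of the variational solution shows that on $L^2(\Gamma) \subset H^{-1/2}(\Gamma)$ the two descriptions of $T$ agree, so $T$ is well-defined as a single linear operator on the sum space.

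Third, I would invoke complex interpolation with parameter $\theta = 2s \in (0,1)$. On a bounded Lipschitz domain one has the standard Sobolev-scale identifications
\begin{equation*}
[H^{-1/2}(\Gamma), L^2(\Gamma)]_{2s} = H^{s-1/2}(\Gamma), \qquad [H^1(\Omega), H^{3/2}(\Omega)]_{2s} = H^{1+s}(\Omega),
\end{equation*}
the indices remaining in regimes where no trace anomalies arise. Interpolation of linear operators then yields $T : H^{s-1/2}(\Gamma) \to H^{1+s}(\Omega)$ bounded, which is precisely the claimed estimate.

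The chief technical point is justifying the interpolation identities on a merely Lipschitz boundary, since textbook proofs often assume more regularity. For the indices appearing here (Sobolev exponents in $(-1/2, 3/2)$, away from the critical half-integer values where traces misbehave) these identifications are classical and can be obtained by standard retraction/extension arguments combined with complex interpolation of Bessel-potential spaces. Once these identities are in hand, everything else is routine, and the existence of $u \in H^{1+s}(\Omega)$ follows simply by density of $L^2(\Gamma)$ in $H^{s-1/2}(\Gamma)$ together with the uniform bound provided by interpolation.
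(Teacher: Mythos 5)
Your proof is correct and follows essentially the same strategy as the paper, which also obtains Theorem~\ref{Hs} by interpolating the $H^1$ estimate of Theorem~\ref{T0} against the $H^{3/2}$ estimate of Proposition~\ref{P4}; you have merely spelled out the endpoint mappings, the interpolation parameter $\theta=2s$, and the scale identifications $[H^{-1/2}(\Gamma),L^2(\Gamma)]_{2s}=H^{s-1/2}(\Gamma)$, $[H^1(\Omega),H^{3/2}(\Omega)]_{2s}=H^{1+s}(\Omega)$ that the paper leaves implicit. One small imprecision worth noting: the coercivity constant at the $s=0$ endpoint depends only on the lower bound $\alpha_*$ (and $\mu$, $\Omega$), not on $\alpha$ itself, so the interpolated constant is $\alpha$-independent, consistent with the statement.
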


\begin{proof}
We obtain the result by interpolation between $\H1$ and $\H{\frac{3}{2}}$ regularity results in Theorem \ref{T0} and Proposition \ref{P4} respectively.
\hfill
\end{proof}

\section{Estimate for strong solution}
\setcounter{equation}{0}

\begin{theorem}[{\bf \boldmath$\W{2}{p}$ estimate}]
\label{strong}
	Let $\Omega$ be a $\mathcal{C}^{1,1}$ bounded domain in $\R^3$, $p\in (1,\infty)$ and $\alpha$ be a constant. Then for $F\in \L{p}$ and $g\in \Wfracb{1-\frac{1}{p}}{p}$, the solution $u$ of the problem
	\begin{equation}
	\label{robin}
	\left\{
	\begin{aligned}
	\Delta u &= F \quad &&\text{ in } \Omega,\\
	\frac{\partial u}{\partial \vn}+ \alpha u  &= g \quad &&\text{ on } \Gamma
	\end{aligned}
	\right.
	\end{equation}
belongs to $\W{2}{p}$ and satisfies the following estimate:
	\begin{equation}
	\label{W2pE}
	\|u\|_{\W{2}{p}} \le C_p(\Omega,\alpha_*) \left( \|F\|_{\L{p}} + \|g\|_{\Wfracb{1-\frac{1}{p}}{p}}\right) 
	\end{equation}
	where the constant $C_p(\Omega,\alpha_*)>0$ is independent of $\alpha$.
\end{theorem}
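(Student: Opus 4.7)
My strategy is to combine the Neumann $\W{2}{p}$-regularity for the Laplacian on a $\mathcal{C}^{1,1}$ domain with the uniform $\W{1}{p}$-estimate of Theorem \ref{T2}, after the parameter-dependent boundary term $\alpha u$ is controlled uniformly by a model-problem analysis on the half-space.

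First I would reduce to $F = 0$ by subtracting the Dirichlet solution $u_D \in \W{2}{p}$ of $\Delta u_D = F$ in $\Omega$, $u_D = 0$ on $\Gamma$, for which classical Dirichlet $W^{2,p}$-theory on a $\mathcal{C}^{1,1}$ domain gives $\|u_D\|_{\W{2}{p}} \le C\|F\|_{\L{p}}$. The remainder $w := u - u_D$ is harmonic in $\Omega$ and satisfies $\partial w/\partial\vn + \alpha w = \tilde g$ on $\Gamma$, where $\tilde g := g - \partial u_D/\partial\vn \in \Wfracb{1-\frac{1}{p}}{p}$ with $\|\tilde g\|_{\Wfracb{1-\frac{1}{p}}{p}} \le C(\|F\|_{\L{p}} + \|g\|_{\Wfracb{1-\frac{1}{p}}{p}})$. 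Reading the Robin condition as the Neumann condition $\partial w/\partial\vn = \tilde g - \alpha w$ and applying the classical Neumann $W^{2,p}$-estimate on a $\mathcal{C}^{1,1}$ domain,
\[
\|w\|_{\W{2}{p}} \le C\bigl(\|\tilde g - \alpha w\|_{\Wfracb{1-\frac{1}{p}}{p}} + \|w\|_{\W{1}{p}}\bigr),
\]
and invoking Theorem \ref{T2} (with $\bm{f}=0$, $F=0$ and boundary data $\tilde g$) to dominate $\|w\|_{\W{1}{p}} \le C\|\tilde g\|_{\Wfracb{1-\frac{1}{p}}{p}}$ uniformly in $\alpha$, the proof reduces to the key uniform a priori estimate
\begin{equation*}
\|\alpha w\|_{\Wfracb{1-\frac{1}{p}}{p}} \le C\,\|\tilde g\|_{\Wfracb{1-\frac{1}{p}}{p}}, \qquad C \text{ independent of } \alpha.
\end{equation*}

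This boundary estimate is the main obstacle, since a direct trace bound produces only the useless inequality $\|\alpha w\|_{\Wfracb{1-\frac{1}{p}}{p}} \le |\alpha|\,\|w\|_{\W{1}{p}}$. I would prove it by localization and a half-space model analysis: a $\mathcal{C}^{1,1}$ partition of unity on $\Gamma$, together with local diffeomorphisms straightening the boundary, reduces the question to the model problem $\Delta \tilde w = 0$ in $\R^3_+$ with $-\partial_{y_3}\tilde w + \alpha \tilde w = \tilde g(y')$ on $\{y_3 = 0\}$. A tangential Fourier transform yields
\[
\widehat{\alpha \tilde w}(\xi',0) = m_\alpha(\xi')\,\hat{\tilde g}(\xi'), \qquad m_\alpha(\xi') = \frac{\alpha}{|\xi'| + \alpha},
\]
and a direct computation shows that the symbol $m_\alpha$ satisfies Mikhlin--H\"ormander bounds $|\xi'|^{|\beta|}|\partial_{\xi'}^\beta m_\alpha(\xi')| \le C_\beta$ uniformly in $\alpha \ge 0$; hence $m_\alpha$ defines an operator bounded on $W^{s,p}(\R^2)$ with norm independent of $\alpha$. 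Assembling the local estimates by the partition of unity, the commutator terms produced by the curvature of $\Gamma$ and the variable coefficients arising from the flattening are of lower order and are absorbed into $\|w\|_{\W{1}{p}}$, which is already uniformly controlled by Theorem \ref{T2}.

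Combining everything gives $\|w\|_{\W{2}{p}} \le C\|\tilde g\|_{\Wfracb{1-\frac{1}{p}}{p}} \le C(\|F\|_{\L{p}} + \|g\|_{\Wfracb{1-\frac{1}{p}}{p}})$ uniformly in $\alpha$, and the estimate \eqref{W2pE} follows for $u = u_D + w$.
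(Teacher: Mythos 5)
Your strategy is genuinely different from the paper's and, in substance, sound. The paper also localizes and flattens, but it then differentiates the localized equation \emph{tangentially} and applies the already-established uniform $W^{1,p}$ estimate (Theorem~\ref{T2}) to the tangential derivatives $z_i=\partial_i w$ ($i=1,2$), recovering $\partial^2_{33}w$ algebraically from the equation; it never needs a dedicated bound for the boundary term $\alpha u$. You instead read the Robin condition as a Neumann condition with an $\alpha$-dependent correction and isolate the essential mechanism behind $\alpha$-uniformity: the Dirichlet-to-Robin symbol $m_\alpha(\xi')=\alpha/(|\xi'|+\alpha)$ satisfies Mikhlin--H\"ormander bounds uniformly in $\alpha\ge 0$, so the map $\tilde g\mapsto \alpha w|_\Gamma$ is bounded on $W^{1-1/p,p}$ with $\alpha$-independent norm. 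This is a transparent and portable explanation of the uniformity (one can read off directly that the constant degenerates neither as $\alpha\to 0$ nor as $\alpha\to\infty$), and the initial subtraction of the Dirichlet solution $u_D$ is a clean reduction; on the other hand, the paper's argument stays entirely within the already-proved first-order theory and avoids the Fourier-multiplier and perturbation machinery.

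Two points in your sketch need care. First, the tangential-Fourier computation must be run as an a~priori estimate for compactly supported $W^{2,p}$ functions on $\R^3_+$ (including a nonzero interior source), not as a representation formula: after localization the interior equation becomes $\Delta(\theta w)=f_\theta\ne 0$, and the resulting extra contribution $\alpha\,m_\alpha(\xi')\!\int_0^\infty\! e^{-|\xi'|s}\hat f_\theta(\xi',s)\,ds$ must be checked to be controlled by $\|f_\theta\|_{L^p}\lesssim\|w\|_{\W{1}{p}}$ in $W^{1-1/p,p}(\R^2)$, uniformly in $\alpha$ — this works because the prefactor is still $m_\alpha$, but it should be carried out. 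Second, the flattening produces Lipschitz (not constant) coefficients and a conormal rather than a normal derivative; after freezing coefficients at a boundary point, the perturbation to the interior operator contains a term of the form $(A-A(y_0)):\nabla^2 w$ which lives at the top order, so the absorption into $\|w\|_{\W{2}{p}}$ must be done with a smallness-of-support argument (standard, but not ``lower order'' as stated) and the absorption must be observed to be $\alpha$-independent. Finally, since $W^{1-1/p,p}(\Gamma)$ for $p\ne 2$ is a Besov--Slobodeckij rather than a Bessel-potential space, you should note (or cite) that Mikhlin multipliers with uniform symbol bounds are bounded on these spaces as well.
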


\begin{remark}
	\rm{
	 We can in fact show the existence of $u\in \W{2}{p}$ for more general $\alpha$, not necessarily constant; in particular for $\alpha \in \Wfracb{1-\frac{1}{q}}{q}$ with $q>\frac{3}{2}$ if $p\le \frac{3}{2}$ and $q=p$ otherwise.
	}
\end{remark}

\begin{proof}
For the given data, there exists a unique solution $u$ of (\ref{robin}) in $\W{1}{p}$, by Theorem \ref{T2}. Then it can be shown that in fact $u$ belongs to $\W{2}{p}$ by Neumann regularity result using bootstrap argument. But concerning the estimate, we do not obtain a $\alpha$ independent bound on $u$, using the estimate for Neumann problem. So we consider the following argument. 
	
As $\Gamma$ is compact and of class $\HC{1}{1}$, there exists an open cover $U_i$ \textit{i.e.} $\Gamma \subset \cup_{i=1}^k U_i$ and bijective maps $H_i: Q\rightarrow U_i$ such that
$$
H_i\in \mathcal{C}^{1,1}(\overline{Q}), \ J^i:=H_i^{-1}\in \mathcal{C}^{1,1}(\overline{U_i}), \ H_i(Q_+) = \Omega \cap U_i \ \text{ and } \ H_i(Q_0) = \Gamma \cap U_i
$$
where we denote
\begin{equation*}
\begin{aligned}
& Q = \{x= (x', x_3); |x'|< 1 \text{ and } |x_3|<1 \}\\
& Q_+ = Q\cap \R^3_+\\
& Q_0 = \{x=(x', 0); |x'| <1\} .
\end{aligned}
\end{equation*}
Then we consider the partition of unity $\theta_i$ corresponding to $U_i$ with $\text{supp} \ \theta_i \subset U_i$. So we can write $u = \sum_{i=0}^k \theta_i u$ where $\theta_0\in C_c^\infty(\Omega)$.
It is easy to see that $v_i = \theta_i u\in W^{2,p}(\Omega\cap U_i)$ and satisfies:
\begin{equation*}
\left\{
\begin{aligned}
 \Delta v_i = \theta_i F + 2 \nabla \theta_i \nabla u + u \Delta \theta_i =: f_i \quad & \text{ in } \quad \Omega \cap U_i\\
 \frac{\partial v_i}{\partial \vn} + \alpha v_i = g + \frac{\partial \theta_i}{\partial \vn} u =: h_i \quad & \text{ on } \quad \partial(\Omega \cap U_i) .
\end{aligned}
\right.
\end{equation*}
Precisely, we have, for all $\varphi \in W^{1,p'}(\Omega\cap U_i)$,
\begin{equation}
\label{23}
\int\displaylimits_{ \Omega\cap U_i} {\nabla v_i \cdot \nabla \varphi} + \alpha \int\displaylimits_{ \Gamma\cap U_i} {v_i \varphi}= -\int\displaylimits_{ \Omega\cap U_i} {f_i \varphi} + \int\displaylimits_{ \Gamma\cap U_i}{h_i \varphi}
\end{equation}
where $f_i\in L^p(\Omega)$ and $h_i\in W^{1-\frac{1}{p},p}(\Gamma)$. Now to transfer $v_i|_{\Omega \cap U_i} $ to $Q_+$, set $w_i(y) = v_i(H_i(y))$ for $y\in Q_+$. Then,
$$
\frac{\partial v_i}{\partial x_j} = \sum_k \frac{\partial w_i}{\partial y_k} \frac{\partial J^i_k}{\partial x_j} .
$$
Also let $\psi \in H^1(Q_+)$ and set $\varphi (x) = \psi(J^i(x))$ for $x\in \Omega\cap U_i$. Then $\varphi \in H^1(\Omega\cap U_i)$ and
$$
\frac{\partial \varphi }{\partial x_j} = \sum_l \frac{\partial \psi}{\partial y_l} \frac{\partial J^i_l}{\partial x_j}.
$$
Thus, putting these in (\ref{23}), we obtain under this change of variable, for all $\psi\in H^1(Q_+)$,
\begin{equation}
\label{24}
\int\displaylimits_{Q_+}{a_{kl}(x) \frac{\partial w_i}{\partial y_k} \frac{\partial \psi}{\partial y_l}} + \alpha \int\displaylimits_{Q_0}{w_i \psi} = - \int\displaylimits_{Q_+}{\tilde{f}_i\psi} + \int\displaylimits_{Q_0}{\tilde{h}_i\psi}
\end{equation}	
with $a_{kl}(x) = \sum_j \frac{\partial J^i_k}{\partial x_j} \frac{\partial J^i_l}{\partial x_j} |\det Jac \ H_i|$, $\tilde{f}_i = f_i\circ J^i$ and $\tilde{h}_i = h_i\circ J^i$. Here $\det Jac \ H_i$ denotes the determinant of the Jacobian matrix of $H_i$. Note that $a_{kl}\in \mathcal{C}^{0,1}(\overline{Q_+})$, $\tilde{f}_i\in L^p (Q_+)$ and $\tilde{h}_i\in W^{1/p',p}(Q_0)$. Also (\ref{24}) is a Robin problem of the form (\ref{robin1}) for $w_i$ on $Q_+$, since $w_i$ vanishes in a neighbourhood of $\partial Q_+ \smallsetminus Q_0$. 

For notational convenience, in this last part, we omit the index $i$ \textit{i.e.} we simply write $w$ instead of $w_i$. Now denoting $\partial _j = \frac{\partial}{\partial x_j}$, we see that $z_i := \partial_i w, i =1, 2$ solves the following problem
\begin{equation}
\left\{
\begin{aligned}
\div (A\nabla  z_i) &= \div (\tilde{f} \bm{e}_i) - \div (\partial_i A\nabla w)
\quad &&\text{ in } Q_+\\
(A\nabla z_i - \tilde{f} \bm{e}_i)\cdot \vn + \alpha z_i &=  -(\partial_i A\nabla w)\cdot \vn + \partial_i \tilde{h} \quad &&\text{ on } Q_0 
\end{aligned}
\right.
\end{equation}
where $\bm{e}_i$ is the unit vector with $1$ in $i^{th}$ position
Thus, we can apply Theorem \ref{T2} for the above system and may conclude
\begin{equation*}
\|z_i\|_{W^{1,p}(Q_+)} \le C_p(Q_+) \left( \|\tilde{f}\|_{L^p(Q_+)} + \|\partial_i A(x)\nabla w\|_{{\bm L}^p(Q_+)} +\|\partial_i \tilde{h}\|_{W^{-\frac{1}{p},p}(Q_0)}\right) 
\end{equation*}
which yields, $\text{ for all } i, j = 1, 2, 3 \text{ except } i=j=3$,
\begin{equation}
\label{25}
\|\partial ^2_{ij} w\|_{L^p (Q_+)} \le C_p (Q_+) \left( \|\tilde{f}\|_{L^p(Q_+)} + \|w\|_{W^{1,p}(Q_+)} +\| \tilde{h}\|_{W^{\frac{1}{p'},p}(Q_0)}\right) . 
\end{equation}
Now to show the estimate for $\partial^2_{33} w$, we can write from the equation (\ref{24}) (omitting the index $i$),
\begin{equation*}
\partial^2_{33} w = \frac{1}{a_{33}} \left( \tilde{f} - a_{ij} \ \partial ^2_{ij} w - \partial_i a_{ij} \ \partial_j w \right) \quad \text{ in } \quad Q_+ .
\end{equation*}
But since $J$ is an one-one map, $a_{33}\ne 0$ and thus together with (\ref{25}), we obtain the same estimate (\ref{25}) for $\partial^2_{33} w$.
Therefore, we can conclude, for all $i= 1, ..., k$,
\begin{equation*}
\|v\|_{W^{2,p}(\Omega\cap U_i)} \le C_p(\Omega) \left(\|F\|_{\L{p}} + \|g\|_{ \Wfracb{1-\frac{1}{p}}{p}}+ \|u\|_{\W{1}{p}} \right) 
\end{equation*}
and consequently (\ref{W2pE}), using $W^{1,p}$-estimate result.
\hfill
\end{proof}	
	
\noindent\textbf{Acknowledgement}: The authors would like to thank Karthik Adimurthi for his valuable suggestions regarding the initial write up of this manuscript.

\bibliographystyle{plain}
\bibliography{Robin_bib}

\end{document}